\numberwithin{equation}{section}
\theoremstyle{plain}
\newtheorem{thm}{Theorem}[section]
\newtheorem{cor}[thm]{Corollary}
\newtheorem{lem}[thm]{Lemma}
\newtheorem{prop}[thm]{Proposition}
\theoremstyle{definition}
\newtheorem{asu}[thm]{Assumptions}
\theoremstyle{remark}
\newtheorem{rem}[thm]{Remark}
\newtheorem{nota}[thm]{Notation}
\newcounter{stp}
\newcommand{\RR}{\mathbb{R}}
\newcommand{\CC}{\mathbb{C}}
\newcommand{\NN}{\mathbb{N}}
\newcommand{\eps}{\varepsilon}
\renewcommand{\phi}{\varphi}
\newcommand{\sL}{\mathcal{L}}
\newcommand{\sLX}{{\sL(X)}}
\newcommand{\sA}{\mathcal{A}}
\newcommand{\sZ}{\mathcal{Z}}
\newcommand{\sP}{\mathcal{P}}
\newcommand{\sU}{\mathcal{U}}
\newcommand{\sX}{\mathcal{X}}
\newcommand{\sG}{\mathcal{G}}
\newcommand{\Id}{Id}
\newcommand{\rL}{\mathrm{L}}
\newcommand{\rC}{\mathrm{C}}
\newcommand{\rW}{\mathrm{W}}
\newcommand{\p}{{\raisebox{1.3pt}{{$\scriptscriptstyle\bullet$}}}}
\newcommand{\U}{U}
\newcommand{\LA}{L_A}
\newcommand{\KD}{K_D}
\newcommand{\Y}{U}
\newcommand{\LpneU}{{\rL^p\bigl([0,1],\U\bigr)}}
\newcommand{\LpntU}{{\rL^p\bigl([0,t],\U\bigr)}}
\newcommand{\LrntU}{{\rL^r\bigl([0,t],\U\bigr)}}
\newcommand{\Cne}{{\rC[0,1]}}
\newcommand{\Cene}{{\rC^1[0,1]}}
\newcommand{\WnzpntU}{\rW^{2,p}_0\bigl([0,t],\U\bigr)}
\newcommand{\dX}{{\partial X}}
\newcommand{\dsX}{{\partial\sX}}
\newcommand{\dY}{{\partial Y}}
\newcommand{\Lene}{\rL^1[0,1]}
\newcommand{\Wzene}{\rW^{2,1}[0,1]}
\newcommand{\RlA}{R(\lambda,A)}
\newcommand{\RlAme}{R(\lambda,\Ame)}
\newcommand{\Tt}{(T(t))_{t\ge0}}
\newcommand{\Am}{{A_m}}
\newcommand{\St}{(S(t))_{t\ge0}}
\newcommand{\TP}{T_{BC}}
\newcommand{\TPt}{(\TP(t))_{t\ge0}}
\newcommand{\imp}{\Rightarrow}
\newcommand{\ds}{\,ds}
\newcommand{\dmr}{\,d\mu(r)}
\newcommand{\dnmr}{\,d|\mu|(r)}
\newcommand{\dt}{\,dt}
\newcommand{\dr}{\,dr}
\newcommand{\gb}{\operatorname{\omega_0}}
\newcommand{\rg}{\operatorname{rg}}
\newcommand{\inc}{\hookrightarrow}
\newcommand{\incc}{\overset{\text{c}}{\hookrightarrow}}
\newcommand{\XmeA}{X_{-1}^A}
\newcommand{\Fav}{\operatorname{F}}
\newcommand{\sXmesA}{\sX_{-1}^\sA}
\newcommand{\XeA}{X_{1}^A}
\newcommand{\Ame}{A_{-1}}
\newcommand{\Tme}{T_{-1}}
\newcommand{\tn}{{t_0}}
\newcommand{\Ft}{\mathcal{F}_t}
\newcommand{\ABC}{A_{BC}}
\renewcommand{\r}{\right}
\renewcommand{\l}{\left}
\newcommand{\dds}{\tfrac{d}{ds}}
\title[Perturbation of Analytic Semigroups and Applications]
{Perturbation of Analytic Semigroups and Applications to Partial Differential Equations}
\author{Martin Adler,  Miriam Bombieri and Klaus-Jochen Engel}
\address{Martin Adler\\
Arbeitsbereich Funktionalanalysis \\
Mathematisches Institut\\
Auf der Morgenstelle 10 \\
D-72076 T\"{u}bingen}
\email{maad@fa.uni-tuebingen.de}
\address{Miriam Bombieri\\
Arbeitsbereich Funktionalanalysis \\
Mathematisches Institut\\
Auf der Morgenstelle 10 \\
D-72076 T\"{u}bingen}
\email{mibo@fa.uni-tuebingen.de}
\address{Klaus-Jochen Engel\footnote{Corresponding author.}\\
Universit\`{a} degli Studi dell'Aquila \\
Dipartimento di Ingegneria e Scienze dell'Informazione e Matematica (DISIM)\\
Via Vetoio\\
I-67100 L'Aquila (AQ)}
\email{klaus.engel@univaq.it}
\keywords{Analytic semigroup, perturbation, generator, Favard space, fractional power}
\subjclass[2010]{47D06, 47A55,  34G10}
\dedicatory{Dedicated to Rainer Nagel on the occasion of his 75th birthday}
\date{\today, \currenttime}%
\begin{document}
\begin{abstract}
In a recent paper we presented a general perturbation result for generators of $C_0$-semigroups, see also Theorem~\ref{thm:main-gen} below. The aim of the present paper is to replace, in case the unperturbed semigroup is analytic, the various conditions appearing in this result by simpler assumptions on the domain and range of the operators involved. 
The power of our result to treat classes of PDE's systematically is illustrated by considering a generic example, a degenerate differential operator with generalized Wentzell boundary conditions and a reaction diffusion equation subject to  Neumann boundary conditions with distributed unbounded delay.
\end{abstract}
\maketitle

\section{Introduction}
Many partial differential equations can be abstractly written as a Cauchy problem of the form
\[
\tag{ACP}
\begin{cases}
{\tfrac{d}{dt}}\, x(t)=Gx(t),&t\ge0,\\
x(0)=x_0
\end{cases}
\]
where $G$ is an unbounded linear (differential) operator on a Banach space $X$, cf. \cite[Chap.~VI]{EN:00}. It is well-known that (ACP) is well-posed if and only if $G$ generates a $C_0$-semigroup on $X$, cf. \cite[Sect.~II.6]{EN:00}. Being concerned with the generator property of $G$ one basically relies on two methods, the Lumer-Phillips theorem in the dissipative- and the Hille--Yosida theorem in the general case. However, the latter is based on growth estimate of all powers of the resolvent of $A$ and can be verified explicitly only in very special cases.

\smallbreak
In order to check well-posedness of (ACP) for (non dissipative) operators $G$ where direct computations involving the resolvent are impossible to perform, one can try to split $G$ into a sum ``$G=A+P$\/'' for a simpler generator $A$ and a perturbation $P$ and then use some kind of perturbation theory to conclude that also $G$ generates a $C_0$-semigroup on $X$. 

\smallbreak
In  \cite{ABE:13} we presented an abstract result in this direction (see also Theorem~\ref{thm:main-gen} below), which can be interpreted as a purely operator-theoretic approach to former work by Weiss (cf. \cite[Thms.~6.1, 7.2]{Wei:94a}) and Staffans (cf. \cite[Thms.~7.1.2, 7.4.5]{Sta:05}) on the well-posedness of linear closed-loop systems.
More precisely, we considered operators $G$ of the form
\[
G=(A_{-1}+BC)|_X
\]
where $B\in\sL(U,X_{-1})$, $C\in\sL(Z,U)$, cf. Section~\ref{ssec:apr} for more details. 
Our former result was based on various ``admissibility'' conditions, cf. Notation~\ref{not:admiss} below, and  unified previous perturbation results due to Desch--Schappacher, Miyadera--Voigt and Greiner.

\smallbreak
In the present paper we replace these admissibility assumptions for generators $A$ of \emph{analytic} semigroups by simple inclusions concerning the range of $B$ and the domain of $C$ with respect to certain intermediate spaces.  We emphasize that our approach is highly versatile and allows treating various classes of differential equations in a unified and systematic way. Moreover, we point out that in contrast to other well-known results on the perturbation of analytic semigroups (see, e.g., \cite[Sect.~III.2]{EN:00}) we do \emph{not} assume that $P=BC$ is relatively bounded with respect to $A$.  On the contrary, in most cases the perturbation $P$ will change the domain of $A$, i.e., we may have $D(G)\ne D(A)$ as in the Examples in Section~\ref{sec:expls}.

\smallskip
This paper is organized as follows. In Section~\ref{sec:PoG} we first recall briefly the perturbation result from \cite{ABE:13} and then show how it can be simplified in the analytic case to obtain our main result, Theorem~\ref{lem:A^alpha-admiss-pair}. For its proof we use a characterization of analytic semigroups of angle $\theta\in(0,\frac{\pi}2]$, see Lemma~\ref{lem:char-analytic}, which might be of its own interest.
Then, in Section~\ref{sec:expls} we illustrate the power of our approach by three examples. First, we consider a generic example which significantly extends boundary perturbations considered by Greiner and Greiner--Kuhn, cf. Corollaries~\ref{cor:SW-gen} and \ref{cor:gen-expl-an}. Next, we apply our results to a degenerate second order differential operator on $\Cne$ with generalized Wentzell boundary conditions. Finally, we show the well-posedness of a reaction-diffusion equation on $\rL^p[0,\pi]$ subject to Neumann boundary conditions with distributed unbounded delay. In the Appendix we collect some results which are useful in order to verify the assumptions of our main results.

\smallskip
We mention that this is the first in a series of papers dedicated to applications of our perturbation result from \cite{ABE:13}. In forthcoming works we will treat, among other, perturbations of generators of cosine families, operator matrices, complete second order- and delay differential equations.

\section{Perturbation of Generators}
\label{sec:PoG}

\subsection{The Abstract Perturbation Result}\label{ssec:apr}

In this subsection we briefly recall the Weiss--Staffans type perturbation result from \cite{ABE:13}.

\smallbreak
On the Banach spaces $X$ and $\Y$ we consider the operators
\begin{itemize}
	\item $A:D(A)\subset X \to X$,
	\item $B\in\sL(\U,\XmeA)$, 
	\item $C\in\sL(Z,\Y)$
\end{itemize}
and assume that $A$ is the generator of a $C_0$-semigroup $\Tt$ on $X$. Here $X_1^A$ and $X_{-1}^A$ denote the inter- and extrapolated Sobolev spaces with respect to $A$ (cf. \cite[Sect.~II.5.a]{EN:00}). Moreover, $Z=D(C)$ is a Banach space such that
\[X_1^A\inc Z\inc X,\]
where ``$\inc$'' denotes a continuous injection.
For a triple $(A,B,C)$ as above and $t>0$ we indicate by
\[
\Ft^{(A,B,C)}:\LpntU\to\LpntU
\]
the associated \emph{input-output map}, i.e.,
\[
\bigl(\Ft^{(A,B,C)}u\bigr)(r)=
C\int_0^r T_{-1}(r-s) B u(s)\ds\quad\text{for }u\in\WnzpntU
\text{ and }r\in[0,t],
\]
where $(\Tme(t))_{t\ge0}$ denotes  the extrapolated semigroup generated by $\Ame$ on $\XmeA$.
Here for  $p\ge1$, $k\in\NN$, an interval $I\subseteq\RR$ such that $0\in I$ and a Banach space $V$ we define
\begin{equation*}
\rW^{k,p}_0(I,V):=
\bigl\{f\in\rW^{k,p}\bigl(I,V\bigr):f(0)=f'(0)=\ldots=f^{(k-1)}(0)=0\bigr\}.
\end{equation*}
Finally, we denote by $\rg(T)$ the range of a linear operator $T$.
Then by  \cite[Thm.~10]{ABE:13} the following holds.  
\begin{thm}\label{thm:main-gen}
Let $A$ generate a $C_0$-semigroup $\Tt$ on $X$, $B\in\sL(U,\XmeA)$ and $C\in\sL(Z,Y)$.
Moreover, assume that there exist $1\le p<+\infty$, $t>0$ and $M\ge0$ such that
\begin{alignat*}{2}
\text{(i)}\quad&\rg\bigl(\RlAme B\bigr)\subseteq Z&&\text{for some }\lambda\in\rho(A),\\
\text{(ii)}\quad&\int_0^{t} T_{-1}(t-s) B u(s)\ds \in X                    &&\text{for all }u\in\LpntU,\\
\text{(iii)}\quad&\int_0^{t}\bigl\|CT(s)x\bigr\|_U^p\ds \leq M\cdot\|x\|_X^p&&\text{for all }x\in D(A),\\
\text{(iv)}\quad&\int_0^t\Bigl\|C\int_0^r T_{-1}(r-s) B u(s)\ds\Bigr\|_U^p\dr\le M\cdot\|u\|_{p}^p\qquad&&\text{for all }u\in\WnzpntU,\\
\text{(v)}\quad&1\in\rho(\Ft^{(A,B,C)}).&&
\end{alignat*}
Then the operator
\begin{equation}\label{eq:def-A_BFC}
\ABC:=(A_{-1}+BC)|_{X},\quad D(\ABC)=\bigl\{x\in Z:(A_{-1}+BC)x\in X\bigr\}
\end{equation}
generates a $C_0$-semigroup $\TPt$ on the Banach space $X$. Moreover,  the perturbed semigroup verifies the \emph{variation of parameters formula}
\begin{equation*} 
\TP(t)x=T(t)x+\int_0^t\Tme(t-s)\cdot BC\cdot \TP(s)x\ds\quad\text{for all } t\ge0\text{ and }x\in D(\ABC).
\end{equation*}
\end{thm}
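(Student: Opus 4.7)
The plan is to realize the perturbed semigroup via the closed-loop construction from systems theory. First I would lift the three ingredients to bounded maps between function spaces: the \emph{control map}
\[
\Phi_t u := \int_0^t T_{-1}(t-s) B u(s)\ds,
\]
the \emph{observation map} $(\Psi_t x)(r) := C T(r) x$ defined initially for $x \in D(A)$, and the \emph{input-output map} $\Ft^{(A,B,C)}$ already introduced above. Conditions (ii), (iii) and (iv) say precisely that, after extension by density where necessary, $\Phi_t : \LpntU \to X$, $\Psi_t : X \to \LpntU$ and $\Ft^{(A,B,C)} : \LpntU \to \LpntU$ are well-defined bounded operators, while (v) makes $I - \Ft^{(A,B,C)}$ boundedly invertible on $\LpntU$.

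Given these maps, the candidate semigroup is defined by the closed-loop formula
\[
\TP(t)x := T(t)x + \Phi_t \bigl(I - \Ft^{(A,B,C)}\bigr)^{-1} \Psi_t x, \qquad x \in X.
\]
The variation of parameters formula then falls out of the Neumann-type identity $(I-\mathcal{F})^{-1} = I + \mathcal{F}(I-\mathcal{F})^{-1}$ applied to $\Ft^{(A,B,C)}$: writing $(I-\Ft^{(A,B,C)})^{-1}\Psi_t x = \Psi_t x + \Ft^{(A,B,C)}(I-\Ft^{(A,B,C)})^{-1}\Psi_t x$ and recognising the second term as $C\TP(\cdot)x$ evaluated on $[0,t]$ reproduces the stated integral identity.

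The main technical step is the semigroup law $\TP(t+s) = \TP(t)\TP(s)$. It rests on cocycle-type identities linking $\Phi_{t+s}$, $\Psi_{t+s}$ and $\mathcal{F}_{t+s}^{(A,B,C)}$ to the corresponding quantities on $[0,s]$ and $[0,t]$ via time shifts and concatenation of inputs; combined with a block-triangular decomposition of $\bigl(I - \mathcal{F}_{t+s}^{(A,B,C)}\bigr)^{-1}$, these yield the composition law. Strong continuity at $t=0$ follows from $\Phi_t u \to 0$ and $\Psi_t x \to 0$ in the respective norms. This bookkeeping is the delicate part of the argument, and I expect it to be the principal obstacle.

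Finally, I would identify the generator as $\ABC$. Condition (i) ensures that $\RlAme B$ maps $\U$ into $Z$, so that $C\RlAme B \in \sL(\U)$ is meaningful and $D(\ABC)$ contains a dense family of resolvent-type elements; in particular, for suitable $\lambda$ the operator $R(\lambda,\ABC)$ can be constructed explicitly from $\RlA$ and $(I-C\RlAme B)^{-1}$. Differentiating the variation of parameters formula at $t=0$ along $x \in D(\ABC)$, and using that $BCx \in \XmeA$ while $(A_{-1}+BC)x \in X$ by definition of the domain, then identifies $\TPt$ as the $C_0$-semigroup generated by $\ABC$.
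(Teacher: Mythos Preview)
The paper does not actually prove this theorem: it is stated as a recall of \cite[Thm.~10]{ABE:13}, with no argument given here. So there is nothing to compare your proposal against in the present paper.

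That said, your sketch is the right strategy and matches the Weiss--Staffans closed-loop construction that \cite{ABE:13} formalises operator-theoretically (and which the introduction of this paper explicitly credits). The control, observation and input-output maps you write down are exactly the objects whose boundedness is encoded in conditions (ii)--(iv), and the closed-loop formula $\TP(t)x = T(t)x + \Phi_t(I-\Ft)^{-1}\Psi_t x$ is the standard ansatz. Your identification of the cocycle/concatenation identities as the crux of the semigroup law is accurate; in \cite{ABE:13} (and in Staffans' book) these are handled by introducing the extended output map and showing that $(T,\Phi,\Psi,\mathcal F)$ forms a well-posed linear system, after which the closed-loop system with unit feedback is again well-posed precisely when $1\in\rho(\Ft)$. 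One point to be careful about: your computation of the generator via differentiating the variation of parameters formula at $t=0$ is heuristic, since $\TP(s)x$ need not lie in $Z$ for $s>0$ even if $x\in D(\ABC)$; the rigorous route goes through the Laplace transform, showing that the resolvent of the generator of $\TPt$ agrees with the explicit candidate $R(\lambda,\ABC)=\RlA+\RlAme B(I-C\RlAme B)^{-1}C\RlA$ for large $\lambda$.
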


\begin{rem}\label{rem:B-admiss-estimate}
Using the closed graph theorem one can show that condition~(ii) in the previous result is equivalent to the estimate
\begin{equation}
\biggl\|\int_0^{t} T_{-1}(t-s) B u(s)\ds\biggr\|_X\le M\cdot\|u\|_{p}
\text{ for all }u\in\rW^{1,p}\bigl([0,t],U\bigr)
\end{equation}
for some $M\ge0$, cf. \cite[Rem.~2]{ABE:13}.
\end{rem}

\begin{nota}\label{not:admiss} 
It is convenient to use the following notions for operators $A$, $B$ and $C$ as above. Consider the conditions~(i)--(v) in Theorem~\ref{thm:main-gen}. Then
\begin{itemize}
\item the triple $(A,B,C)$ is called \emph{compatible} if (i) holds,
\item the operator $B$ is called \emph{$p$-admissible control operator} if (ii) holds,
\item the operator $C$ is called \emph{$p$-admissible observation operator} if (iii) holds,
\item the pair $(B,C)$ is called \emph{jointly $p$-admissible} if (i)--(iv)  hold,
\item the identity $Id_\U\in\sL(U)$ is called \emph{$p$-admissible feedback operator} if (v) holds.
\end{itemize}
\end{nota}

\subsection{Perturbation of Analytic Semigroups}\label{ssec:PAS}
While Theorem~\ref{thm:main-gen} is valid for arbitrary semigroups the following result is tailored for the analytic case. It substitutes the compatibility and admissibility conditions by inclusions of the range of $B$ and the domain of $C$ in certain intermediate spaces. 

\smallskip
In the sequel $\gb(A)$ denotes the growth bound of the semigroup generated by $A$, cf. \cite[Def.~I.5.6]{EN:00}, $\Fav^A_\alpha$ is the Favard space of $A$ of order $\alpha\in\RR$, see \cite[Sect.~II.5.b]{EN:00} and $(-A)^\gamma$ indicates the fractional power of order $\gamma\in\RR$ of $A$ as in \cite[Sect.~II.5.(c)]{EN:00}. Finally,  for a linear operator $T$ we write $[D(T)]:=(D(T),\|\p\|_T)$ with the graph norm $\|\p\|_T$ given by $\|x\|_T:=\|x\|+\|Tx\|$ for $x\in D(T)$.

\begin{thm}\label{lem:A^alpha-admiss-pair}
Let $(A,D(A))$ generate an analytic semigroup of angle $\theta\in(0,\frac\pi2]$ on $X$. Moreover, assume that for $B\in\sL(U,\XmeA)$ and $C\in\sL(Z,\Y)$ there exist $\lambda>\gb(A)$, $\beta\ge0$ and $\gamma>0$ such that
\begin{enumerate}[(i)]
\item $\rg(\RlAme B) \subseteq\Fav^A_{1-\beta}$,
\item $[D((\lambda-A)^\gamma)]\inc Z$,
\item $\beta+\gamma<1$.
\end{enumerate}
Then the following holds.
\goodbreak
\begin{enumerate}[(a)]
\item $(A,B,C)$ is compatible.
\item $B$ is a $p$-admissible control operator for all $p>\frac1{1-\beta}$, and, if $\beta=0$, then also for $p=1$.
\item $C$ is a $p$-admissible observation operator for all $p<\frac1\gamma$.
\item$(B,C)$ is jointly $p$-admissible for all $\frac1{1-\beta}<p<\frac1\gamma$, and, if $\beta=0$, then also for $p=1$.
\item for every $0<\eps<1-(\beta+\gamma)$ and $\frac1{1-\beta}\le p<\frac1\gamma$ there exists $M\ge0$ such that
\[
\bigl\|\Ft^{(A,B,C)}\bigr\|_p\le M\cdot t^{\eps}\quad\text{for all }0<t\le1.
\]
In particular, $Id_U$ is a $p$-admissible feedback operator for the triple $(A,B,C)$.
\end{enumerate}
Finally, the operator $(\Ame+BC)|_X$ generates an analytic $C_0$-semigroup of angle $\theta$ on $X$.
\end{thm}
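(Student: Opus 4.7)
The plan is to verify conditions~(i)--(v) of Theorem~\ref{thm:main-gen} for the triple $(A,B,C)$, apply that result to produce a $C_0$-semigroup generated by $\ABC$, and finally upgrade to analyticity of angle~$\theta$ by a rotation argument based on Lemma~\ref{lem:char-analytic}. The backbone of parts (a)--(d) is the chain of embeddings which, together with $\beta+\gamma<1$, allows us to fix $\eps\in(0,1-\beta-\gamma)$ so that
\[
\Fav^A_{1-\beta}\inc\bigl[D\bigl((\lambda-A)^{1-\beta-\eps}\bigr)\bigr]\inc\bigl[D\bigl((\lambda-A)^\gamma\bigr)\bigr]\inc Z;
\]
the first inclusion is the standard embedding of Favard into fractional-power domains valid for analytic generators, the second is immediate from $\gamma<1-\beta-\eps$, and the third is hypothesis~(ii) of the theorem.

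The displayed chain is exactly compatibility, hence (a). For (b), writing $R:=\RlAme B\in\sL(U,\Fav^A_{1-\beta})$ we have $\Tme(t)Bu=(\lambda-A)T(t)Ru$, and the Favard-type analytic estimate $\|(\lambda-A)T(t)x\|\le Mt^{-\beta}\|x\|_{\Fav^A_{1-\beta}}$ yields $\|\Tme(t)Bu\|\le M't^{-\beta}\|u\|_U$; H\"older's inequality on the convolution then handles $p>\frac1{1-\beta}$ (and $p=1$ when $\beta=0$). For (c), the embedding $[D((\lambda-A)^\gamma)]\inc Z$ gives $\|CT(s)x\|_Y\le M\|(\lambda-A)^\gamma T(s)x\|_X\le M's^{-\gamma}\|x\|_X$ for $x\in D(A)$, and $\int_0^t s^{-p\gamma}\ds<\infty$ precisely when $p\gamma<1$.

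The heart of the proof is (e), which subsumes (d) and, via a Neumann series, yields~(v). Starting from
\[
\bigl(\Ft^{(A,B,C)}u\bigr)(r)=C\int_0^r(\lambda-A)T(r-s)Ru(s)\ds,
\]
I factor the integrand as $(\lambda-A)^{1+\gamma}T(r-s)Ru(s)=(\lambda-A)^{\gamma+\beta+\eps}T(r-s)\cdot(\lambda-A)^{1-\beta-\eps}Ru(s)$, apply the analytic estimate $\|(\lambda-A)^{\gamma+\beta+\eps}T(r-s)\|\le M(r-s)^{-(\gamma+\beta+\eps)}$, use $\|(\lambda-A)^{1-\beta-\eps}Ru(s)\|\le M\|u(s)\|_U$ from the Favard embedding, and the bound $\|Cy\|_Y\le M\|(\lambda-A)^\gamma y\|_X$ to arrive pointwise at
\[
\bigl\|\bigl(\Ft^{(A,B,C)}u\bigr)(r)\bigr\|_Y\le M\int_0^r(r-s)^{-(\gamma+\beta+\eps)}\|u(s)\|_U\ds.
\]
Since $\gamma+\beta+\eps<1$, Young's inequality with the $L^1$-kernel gives $\|\Ft^{(A,B,C)}u\|_p\le Mt^{1-\beta-\gamma-\eps}\|u\|_p$, which is~(e). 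For $t$ small enough $\|\Ft^{(A,B,C)}\|_p<1$, so $1\in\rho(\Ft^{(A,B,C)})$ by Neumann series, which is (v). Theorem~\ref{thm:main-gen} therefore produces the $C_0$-semigroup generated by $\ABC=(\Ame+BC)|_X$.

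To obtain analyticity of angle $\theta$, I invoke Lemma~\ref{lem:char-analytic} to reduce the problem to showing that $e^{i\phi}\ABC$ generates a $C_0$-semigroup for every real $\phi$ with $|\phi|<\theta$. Since $A$ is analytic of angle $\theta$, the rotated operator $e^{i\phi}A$ is again the generator of an analytic semigroup (of angle $\theta-|\phi|$); its extrapolated space, its Favard space of order $1-\beta$ and its fractional-power domain of exponent $\gamma$ coincide (up to equivalent norms) with those of $A$. Hence the triple $(e^{i\phi}A,B,C)$ satisfies hypotheses (i)--(iii) with the same parameters $\beta,\gamma$, and applying parts (a)--(e) followed by Theorem~\ref{thm:main-gen} to this rotated triple furnishes a $C_0$-semigroup generated by $e^{i\phi}(\Ame+BC)|_X=e^{i\phi}\ABC$, completing the proof. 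The main obstacle I anticipate is the fractional-power factorization underlying~(e): the slack $\beta+\gamma<1$ must be spent precisely to extract $(\lambda-A)^{1-\beta-\eps}$-regularity from $Ru(s)$ while leaving $(\lambda-A)^{\gamma+\beta+\eps}$ on $T(r-s)$ with an integrable singularity, and any loss on either side would break the estimate.
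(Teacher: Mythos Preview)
Your treatment of (a)--(c) matches the paper's essentially line for line. For (d)--(e) you take a more direct route than the paper: you spend the slack $1-\beta-\gamma$ on an $\eps$ via the embedding $\Fav^A_{1-\beta}\inc[D((\lambda-A)^{1-\beta-\eps})]$, obtain the pointwise kernel bound $(r-s)^{-(\beta+\gamma+\eps)}$, and finish with $L^1$-Young. The paper instead avoids the $\eps$-loss by staying at the Favard level: it splits $(-A)^{1+\gamma}T(t)=(-A)^\gamma T(t/2)\cdot AT(t/2)$ and uses $\|AT(t/2)\|_{\sL(\Fav^A_{1-\beta},X)}\le M(t/2)^{-\beta}$ to get the sharper kernel $t^{-(\beta+\gamma)}$, then applies Young with a non-trivial exponent $q$ (satisfying $\tfrac1q=\beta+\tfrac1r$) to obtain an $L^{1/(1-\beta)}\!\to L^r$ mapping of $\Ft$ for every $r\in[\tfrac1{1-\beta},\tfrac1\gamma)$, and finally a double Jensen interpolation to extract the power $t^{1-\beta-1/r}$ in (e). Your argument is shorter and works for every $p\ge1$ at once; the paper's buys the additional $L^{1/(1-\beta)}\!\to L^r$ smoothing of $\Ft$ as a by-product.

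There is, however, a genuine slip in the analyticity step. Applying (a)--(e) and Theorem~\ref{thm:main-gen} to the triple $(e^{i\phi}A,B,C)$ produces the generator
\[
\bigl((e^{i\phi}A)_{-1}+BC\bigr)\big|_X=(e^{i\phi}A_{-1}+BC)\big|_X,
\]
which is \emph{not} $e^{i\phi}(A_{-1}+BC)|_X$; the two differ already when $BC$ is a nonzero bounded perturbation. To land on $e^{i\phi}\ABC$ you must also rotate $B$: apply the argument to $(e^{i\phi}A,\,e^{i\phi}B,\,C)$. Hypotheses (i)--(iii) still hold for this triple by the invariance $\Fav^A_\alpha=\Fav^{e^{i\phi}A}_\alpha$ and $D((\lambda-A)^\gamma)=D((\lambda-e^{i\phi}A)^\gamma)$, which is precisely the content of Lemma~\ref{lem:F-D-analytic} and deserves a proof rather than an assertion. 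Then $(e^{i\phi}A)_{-1}+e^{i\phi}BC=e^{i\phi}(A_{-1}+BC)$, so the perturbed operator is exactly $e^{i\phi}\ABC$, and Lemma~\ref{lem:char-analytic} closes the argument as you intended.
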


In order to prove part (b) and (d) of the above result we need the following quite crippled version of Young's inequality which, however, perfectly fits our needs. Here, for two functions $K$ and $v$ on $(0,t_0]$ for some $t_0>0$ we define their convolution by
\[
(K*v)(t):=\int_0^tK(t-s)v(s)\ds,\quad t\in(0,t_0].
\]

\begin{lem}\label{lem:Young}
Let $K:(0,1]\to\sL(Y,X)$ be strongly continuous. Moreover, assume that $1\le p,q,r\le+\infty$ satisfy
$\frac1p+\frac1q=1+\frac1r$. 
If $k(\p):=||K(\p)||_{\sL(Y,X)}\in\rL^q[0,1]$ and $v\in\rC([0,1],Y)$, then $K*v\in\rL^r([0,1],X)$ and
\begin{equation}\label{eq:Young-ie}
\|K*v\|_r\le\|k\|_q\cdot\|v\|_p.
\end{equation}
\end{lem}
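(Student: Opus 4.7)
The plan is to reduce the operator-valued convolution inequality to the classical scalar Young convolution inequality by estimating the integrand pointwise in norm.

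First, I would verify that for each $t\in(0,1]$ the integral $(K*v)(t)$ exists as a Bochner integral in $X$. The integrand $s\mapsto K(t-s)v(s)$ is strongly measurable---in fact continuous---on $[0,t)$, thanks to the strong continuity of $K$ on $(0,1]$ and the continuity of $v$. Combined with the pointwise estimate
\[
\bigl\|K(t-s)v(s)\bigr\|_X \le k(t-s)\cdot\|v(s)\|_Y,
\]
the fact that $k\in\rL^q[0,1]\subseteq\rL^1[0,1]$ (because $q\ge1$) and that $\|v\|_Y$ is bounded on $[0,1]$ show that the integrand is Bochner integrable, so $(K*v)(t)\in X$ is well-defined.

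Taking norms inside the integral and writing $\tilde v(s):=\|v(s)\|_Y$ then yields the pointwise bound
\[
\bigl\|(K*v)(t)\bigr\|_X \le \int_0^t k(t-s)\,\|v(s)\|_Y\ds = (k*\tilde v)(t)\quad\text{for all }t\in(0,1].
\]
Extending $k$ and $\tilde v$ by zero outside $[0,1]$ preserves their $\rL^q$- and $\rL^p$-norms and only enlarges the convolution on $[0,1]$. Applying the classical scalar Young inequality on $\RR$---valid precisely under the hypothesis $\tfrac1p+\tfrac1q=1+\tfrac1r$---then gives
\[
\|K*v\|_r \le \|k*\tilde v\|_r \le \|k\|_q\cdot\|\tilde v\|_p = \|k\|_q\cdot\|v\|_p,
\]
which is the asserted estimate \eqref{eq:Young-ie}.

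The only remaining technicality is strong measurability of $t\mapsto(K*v)(t)$, needed in order for $\|K*v\|_r$ to be meaningful. This follows from the continuity of $K*v$ on $(0,1]$, which in turn is a routine dominated convergence argument based on the same majorant $k(\cdot)\|v(\cdot)\|_Y$. I do not anticipate any substantive obstacle: the entire quantitative content is carried by scalar Young, and the vector-valued case reduces to it via the triangle inequality for Bochner integrals.
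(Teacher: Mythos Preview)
Your proof is correct and follows essentially the same route as the paper: establish existence of the Bochner integral via $k\in\rL^q\subseteq\rL^1$ and boundedness of $v$, then reduce the norm estimate to the scalar Young inequality through the pointwise bound $\|(K*v)(t)\|_X\le (k*\|v(\cdot)\|_Y)(t)$. The only difference is that the paper proves continuity of $K*v$ explicitly by the substitution $s\mapsto t-s$ and splitting $(K*v)(t+h)-(K*v)(t)$ into two pieces, whereas you defer to dominated convergence; note that your stated majorant $k(t-s)\|v(s)\|_Y$ depends on $t$, so to make the dominated convergence rigorous you would need the same substitution first to obtain the $t$-independent dominator $k(s)\|v\|_\infty$.
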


\begin{proof} We adapt the proof of \cite[Prop.~1.3.5]{ABHN:01} where convolutions on $\RR_+$ are considered and $K$ is assumed to be strongly continuous on $\RR_+$. %
Fix $0<t\le1$ and $v\in\rC([0,1],Y)$. Then $s\mapsto b(s):=K(t-s)v(s)$ is continuous on $(0,t)$, hence measurable. Note that $k(t-\p)\in\rL^{q}[0,t]\subseteq\rL^{1}[0,t]$, thus the estimate
\[
\|b(s)\|=\|K(t-s)v(s)\|\le k(t-s)\cdot\|v\|_\infty
\]
implies that $||b(\p)||$ is integrable on $[0,t]$. By Bochner's theorem (see \cite[Thm.~1.1.4]{ABHN:01}) this shows that $b$ is integrable and hence $(K*v)(t)$ exists for all $t\in[0,1]$. Next we show that $t\mapsto (K*v)(t)$ is continuous on $[0,1]$. Let $t\in[0,1]$ and $h\in\RR$ such that $t+h\in[0,1]$. Since $k\in\rL^q[0,1]\subseteq\rL^1[0,1]$ and $v\in\rC([0,1],Y)$ is uniformly continuous we conclude
\begin{align*}
\bigl\|(K*v)(t+h)&-(K*v)(t)\bigr\|\\
&\le\int_0^t k(s)\cdot\bigl\|v(t+h-s)-v(t-s)\bigr\|\ds
    +\biggl|\int_t^{t+h}k(s)\cdot\bigl\|v(s+h-s)\bigr\|\ds\biggr|\\
&\le\|k\|_1\cdot\sup_{s,s+h\in[0,1]}\bigl\|v(s+h)-v(s)\bigr\|
    +\biggl|\int_t^{t+h}k(s)\ds\biggr|\cdot\|v\|_{\infty}\\
&\to0\text{ as }h\to0.
\end{align*}
This shows that $K*v\in\rC([0,1],X)\subset\rL^r([0,1],X)$ and by the scalar-valued version of Young's inequality (see \cite[Sect.~IX.4, Expl.~1]{RS:75}) we finally obtain
\[
\|K*v\|_r\le \bigl\|k*\|v(\p)\|_Y\bigr\|_r\le\|k\|_q\cdot\|v\|_p
\]
as claimed.
\end{proof}

Now we are well-prepared to give the

\begin{proof}[Proof of Theorem~\ref{lem:A^alpha-admiss-pair}]
Note that for every $\lambda>\gb(A)$ we have $\Fav_{1-\beta}^A=\Fav_{1-\beta}^{A-\lambda}$. Hence, replacing if necessary $A$ by $A-\lambda$ we can assume without loss of generality that $\gb(A)<0$ and $\lambda=0$.

\smallbreak
(a) By \cite[Props.~II.5.14 \& 5.33]{EN:00} we have
\begin{equation}\label{eq:inc-a-f-x}
D\bigl((-A)^\alpha\bigr)\inc\Fav^A_\alpha\inc D\bigl((-A)^\delta\bigr)
\quad\text{ for all $1>\alpha>\delta>0$}.
\end{equation}
Since by assumption~(iii) we have $1-\beta>\gamma$, \eqref{eq:inc-a-f-x} and (ii) imply
\[
\rg\bigl(\Ame^{-1}B\bigr)\subseteq\Fav^A_{1-\beta}\subseteq D\bigl((-A)^\gamma\bigr)\subseteq Z=D(C),
\]
i.e., the triple $(A,B,C)$ is compatible.

\smallbreak
(b) Since $\Ame^{-1}B\in\sL(U,X)$ and $\Fav_{1-\beta}^A\inc X$, assumption~(i) and the closed graph theorem imply that $\Ame^{-1}B\in\sL(U,\Fav_{1-\beta}^A)$. Hence, for all $u\in\rC([0,1],U)\subset\LpneU$
\begin{equation}\label{eq:def-v}
v:=\Ame^{-1}B u\in\rC\bigl([0,1],\Fav_{1-\beta}^A\bigr)\subset\rL^p\bigl([0,1],\Fav_{1-\beta}^A\bigr).
\end{equation}
Since $\rg(T(t))\subseteq D(A^\infty)$ for all $t>0$, we can define
\[
K:(0,1]\to\sL\bigl(\Fav_{1-\beta}^A,X\bigr),\quad K(t):=AT(t).
\]
Then $K$ is strongly continuous on $(0,1]$ and by \cite[Prop.~II.5.13]{EN:00} there exists $M>0$ such that
\begin{equation*}
\bigl\|t^\beta K(t)x\bigr\|_X\le\sup_{s\in(0,1]}\bigl\|s^\beta AT(s)x\bigr\|_X\le M\cdot\|x\|_{\Fav_{1-\beta}^A}
\quad\text{ for all $x\in\Fav_{1-\beta}^A$}.
\end{equation*}
This implies that
\begin{equation}\label{eq:esti-AT(t)-Y_beta}
k(t):=\|K(t)\|_{\sL(\Fav_{1-\beta}^A,X)}\le M\cdot t^{-\beta}\quad\text{for all }t\in(0,1].
\end{equation}
Hence, $k\in\rL^q[0,1]$ if $\beta\cdot q<1$, i.e.,
\begin{equation*}
k\in\rL^q[0,1]\quad\text{if}\quad
\begin{cases}
q<\frac1\beta&\text{and }\beta>0,\text{ or}\\
q\ge 1&\text{and }\beta=0.
\end{cases}
\end{equation*}
Now we choose $r=+\infty$ in Young's inequality from Lemma~\ref{lem:Young}. Then $q=\frac p{p-1}$ and from \eqref{eq:Young-ie} it follows that there exists $M\ge0$ such that for all $u\in\rC([0,1],U)$
\begin{align*}
\biggl\|\int_0^1\Tme(1-s)Bu(s)\ds\biggr\|_X
&=\bigl\|(K*v)(1)\bigr\|_X
\le \|K*v\|_\infty\\
&\le M\cdot \|k\|_q\cdot\|u\|_p
\end{align*}
provided
\begin{equation*}
\left\{
\begin{aligned}
\frac p{p-1}=q<\frac1\beta&\text{ and }\beta>0&&\iff\quad p>\frac1{1-\beta}\text{ and }\beta>0,\text{ or}\\
\frac p{p-1}=q\ge 1&\text{ and }\beta=0&&\iff\quad p\ge1\text{ and }\beta=0.
\end{aligned}
\right.
\end{equation*}
Since $\rW^{1,p}([0,1],U)\subset\rC([0,1],U)$, the assertion follows from Remark~\ref{rem:B-admiss-estimate}.

\smallbreak
(c) For all $t>0$ we have by (ii)
\[
\bigl\| CT(t)\bigr\|_{\sL(X,\Y)}\le\bigl\|C(-A)^{-\gamma}\bigr\|_{\sL(X,\Y)}\cdot\bigl\|(-A)^\gamma T(t)\bigr\|_{\sLX}.
\]
Since by \cite[Lem.~11.36]{RR:93} there exists $M\ge0$ such that
\begin{equation}\label{eq:est-AT(t)-aHG}
\bigl\|(-A)^\gamma T(t)\bigr\|_{\sLX}\le M\cdot t^{-\gamma}\quad\text{for all }t\in(0,1],
\end{equation}
we conclude that $C$ is a $p$-admissible observation operator for all $p<\frac1\gamma$.

\smallbreak
(d) Since $\rg(T(t))\subseteq D(A^\infty)$ for all $t>0$ we can define
\[
L:(0,1]\to\sL\bigl(\Fav_{1-\beta}^A,X\bigr),\quad L(t):=(-A)^{1+\gamma}T(t).
\]
Then $L$ is strongly continuous on $(0,1]$.
Using \eqref{eq:esti-AT(t)-Y_beta} and \eqref{eq:est-AT(t)-aHG} we obtain for suitable $M\ge0$
\begin{align}
l(t):=\bigl\|L(t)\bigr\|_{\sL(\Fav_{1-\beta}^A,X)}\notag
&\le\bigl\|(-A)^\gamma T\bigl(\tfrac t2\bigr)\bigr\|_{\sLX}\cdot \bigl\|AT\bigl(\tfrac t2\bigr)\bigr\|_{\sL(\Fav_{1-\beta}^A,X)}\\\label{eq:est-L(t)}
&\le M\cdot t^{-(\beta+\gamma)}.
\end{align}
Now choose in Young's inequality from Lemma~\ref{lem:Young} $p=\frac1{1-\beta}\le r<\frac1\gamma$. Then we obtain $\frac1q=\beta+\frac1r>\beta+\gamma$ and hence
\[
q\cdot(\beta+\gamma)<1
\]
which by \eqref{eq:est-L(t)} implies that $l\in \rL^q[0,1]$. Thus, by \eqref{eq:Young-ie} there exists $M\ge0$ such that the input-output map $\Ft:=\Ft^{(A,B,C)}$ satisfies for all $u\in\rC([0,1],U)$
\begin{align*}
\bigl\|\Ft u\bigr\|_r=
\biggl(\int_0^1\biggl\|C\int_0^t\Tme&(t-s)Bu(s)\ds\biggr\|_\Y^{r}\!dt\biggr)^{\frac1{r}}\\
&\le\bigl\|C(-A)^{-\gamma}\bigr\|\cdot\biggl(\int_0^1\biggl\|\int_0^t(-A)^{1+\gamma}\Tme(t-s)\cdot\Ame^{-1} Bu(s)\ds\biggr\|_X^{r}\!dt\biggr)^{\frac1{r}}\\
&\le M\cdot\bigl\|(L*v)\bigr\|_{r}\\
&\le M\cdot \|l\|_q\cdot\|u\|_{\frac1{1-\beta}}
\end{align*}
where $v\in\rC\bigl([0,1],\Fav_{1-\beta}^A\bigr)$ is given by \eqref{eq:def-v}. This shows that for every $\frac1{1-\beta}\le r<\frac1\gamma$ the input-output map has a unique bounded extension
\begin{equation}\label{eq:F_t:Lp->Lr}
\Ft:\rL^{\frac1{1-\beta}}\bigl([0,t],U\bigr)\to\rL^{r}\bigl([0,t],\Y\bigr).
\end{equation}
Since $\rL^r\bigl([0,t],U\bigr)\inc \rL^{\frac1{1-\beta}}\bigl([0,t],U\bigr)$ for $r\ge{\frac1{1-\beta}}$, this together with (b) and (c) proves that the pair $(B,C)$ is jointly $p$-admissible for all $p\in(\frac1{1-\beta},\frac1\gamma)$ and in case $\beta=0$ also for $p=1$.

\smallbreak
(e) By Jensen's inequality we have for all $1\le p\le r<+\infty$ and $u\in\LrntU\subseteq\LpntU$
\begin{equation*} 
\|u\|_p\le t^{\frac1p-\frac1r}\cdot\|u\|_r.
\end{equation*}
This combined with \eqref{eq:F_t:Lp->Lr} gives for all $\frac1{1-\beta}\le p\le r<\frac1\gamma$ and $u\in\LrntU$ that
\begin{align*}
t^{-\frac1p+\frac1r}\cdot\|\Ft u\|_p\le\|\Ft u\|_r\le M\cdot\|u\|_{\frac1{1-\beta}}\le M\cdot t^{{1-\beta}-\frac1p}\cdot\|u\|_p.
\end{align*}
For given $0<\eps<1-(\beta+\gamma)$ we take $r:=\frac1{1-\beta-\eps}\in\bigl(\frac1{1-\beta},\frac1\gamma\bigr)$ and obtain by density of $\LrntU$ in $\LpntU$ that
\[
\|\Ft\|_p\le M\cdot t^{{1-\beta}-\frac1r}
\le M\cdot t^{\eps}
\]
as claimed. Clearly (d) and (e) combined with Theorem~\ref{thm:main-gen} imply that $(\Ame+BC)|_X$ generates a $C_0$-semigroup. This semigroup is analytic of angle $\theta$ by the following two lemmas. More precisely, Lemma~\ref{lem:F-D-analytic} allows us to repeat the above reasoning for $A$, $B$, $C$ replaced by $e^{i\phi}A$, $e^{i\phi}B$, $C$ to obtain that also $e^{i\phi}A_{BC}$ is a generator of a $C_0$-semigroup on $X$ for all $\phi\in(-\theta,\theta)$. By Lemma~\ref{lem:char-analytic} this implies the assertion.
\end{proof}

In the following for an operator $A$ and $\phi\in\RR$ we use the notation
\[
A_\phi:=e^{i\phi}A.
\]

\begin{lem}\label{lem:char-analytic}
Let $0<\theta\le\frac\pi2$.
Then $A$ generates an analytic semigroup of angle $\theta$ if and only if  $A_\phi$ generates a $C_0$-semigroup for every $\phi\in(-\theta,\theta)$. 
\end{lem}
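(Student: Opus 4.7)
The forward implication is routine: if $(T(z))_{z\in\Sigma_\theta\cup\{0\}}$ denotes the analytic semigroup generated by $A$, then for each $\phi\in(-\theta,\theta)$ the family $S_\phi(t):=T(e^{i\phi}t)$, $t\ge0$, inherits the $\rC_0$-semigroup structure from $T$ along the ray $\RR_+e^{i\phi}$, and differentiating at $t=0$ identifies its generator as $e^{i\phi}A=A_\phi$.

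For the converse my plan is to verify the standard spectral characterization of analytic semigroup generators: $A$ generates an analytic semigroup of angle $\theta$ if and only if, for each $\theta'\in(0,\theta)$, there exist $\omega\in\RR$ and $C>0$ such that $\omega+\Sigma_{\pi/2+\theta'}\subseteq\rho(A)$ and $\|R(\lambda,A)\|\le C/|\lambda-\omega|$ on that sector. The bridge to the hypothesis is the elementary identity $R(\mu,A)=e^{i\phi}R(e^{i\phi}\mu,A_\phi)$, which is immediate from $A_\phi=e^{i\phi}A$. Since each $A_\phi$ is a $\rC_0$-semigroup generator, Hille--Yosida supplies constants $M_\phi\ge1$ and $\omega_\phi\in\RR$ with $\|R(\lambda,A_\phi)\|\le M_\phi/(\operatorname{Re}\lambda-\omega_\phi)$ on $\operatorname{Re}\lambda>\omega_\phi$, and the identity transports this into
\[
\|R(\mu,A)\|\le\frac{M_\phi}{\operatorname{Re}(e^{i\phi}\mu)-\omega_\phi}
\]
on the rotated half-plane $\{\mu:\operatorname{Re}(e^{i\phi}\mu)>\omega_\phi\}$.

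The core step is then to glue finitely many of these half-plane estimates into a sector bound. Fix $\theta'<\theta$. For each direction $\psi\in[-\pi/2-\theta',\pi/2+\theta']$ one can choose $\phi\in(-\theta,\theta)$ with $\cos(\phi+\psi)>0$, and by compactness a finite selection $\phi_1,\ldots,\phi_N\in(-\theta,\theta)$ suffices so that each $\phi_i$ covers a closed sub-arc on which $\cos(\phi_i+\psi)\ge c$ for a common constant $c>0$. On the associated sub-sector the displayed estimate gives $\|R(\mu,A)\|=O(|\mu|^{-1})$ for $|\mu|$ large, and choosing $\omega\in\RR$ large enough to absorb all the $\omega_{\phi_i}$ together with $C$ the maximum of the resulting finitely many constants yields the required $\|R(\lambda,A)\|\le C/|\lambda-\omega|$ on $\omega+\Sigma_{\pi/2+\theta'}$. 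Letting $\theta'\uparrow\theta$ and invoking uniqueness of the analytic extension on nested sectors finishes the proof. The delicate point is precisely this finite-cover step: the hypothesis is invoked only at finitely many angles $\phi_i$, so no uniformity of the Hille--Yosida constants $(M_\phi,\omega_\phi)$ in $\phi$ across the whole interval $(-\theta,\theta)$ is required---finitely many pairs can always be tamed into one via a single sufficiently large shift.
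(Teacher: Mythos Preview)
Your argument is correct and follows a genuinely different route from the paper's. The paper proceeds constructively on the semigroup side: after first invoking \cite[Thm.~II.4.6.(b)]{EN:00} to get analyticity of \emph{some} angle $\theta'>0$, it fixes $\phi\in(\theta',\theta)$ and builds the extension explicitly as $\tilde T(z):=T_\phi(r_+(z))\,T_{-\phi}(r_-(z))$, where $r_\pm(z)$ come from decomposing $z\in\bar\Sigma_\phi$ along the rays $e^{\pm i\phi}\RR_+$; it then verifies strong continuity, identifies the generator along each ray, and propagates analyticity by translation. Thus only the two semigroups $(T_{\pm\phi}(t))_{t\ge0}$ are used to reach angle~$\phi$.

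Your proof instead works entirely on the resolvent side via the sectorial characterization, transporting Hille--Yosida half-plane estimates through $R(\mu,A)=e^{i\phi}R(e^{i\phi}\mu,A_\phi)$ and patching finitely many rotated half-planes by compactness of the arc $[-\tfrac\pi2-\theta',\tfrac\pi2+\theta']$. This is shorter and avoids the explicit construction of $\tilde T$; the trade-off is that you use finitely many angles $\phi_1,\dots,\phi_N$ rather than just two, and you rely on the resolvent characterization as a black box. The paper's approach, by contrast, yields the analytic extension and its strong continuity on the \emph{closed} sector directly, which is informative in its own right. Both arguments correctly sidestep any need for uniformity of the Hille--Yosida constants in~$\phi$.
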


\begin{proof} Assume first that  $A$ generates an analytic semigroup $(T(z))_{z\in\Sigma_\theta\cup\{0\}}$ of angle $\theta$ where $\Sigma_\theta$ denotes the open sector
\[
\Sigma_\theta:=\bigl\{z\in\CC\setminus\{0\}:|\arg(z)|<\theta\bigr\}.
\]
Then it is clear that for every $\phi\in(-\theta,\theta)$ the operators $T_\phi(t):=T(e^{i\phi}t)$ define a strongly continuous semigroup $(T_\phi(t))_{t\ge0}$ with generator $A_\phi$, cf. \cite[Prop.~3.7.2.(c)]{ABHN:01}.

\smallbreak
Conversely, assume that $A_\phi$ generates a $C_0$-semigroup $(T_\phi(t))_{t\ge0}$ for every $\phi\in(-\theta,\theta)$. Then by  \cite[Thm.~II.4.6.(b)]{EN:00} the operator $A$ generates an analytic semigroup $(T(z))_{z\in\Sigma_{\theta'}\cup\{0\}}$ of some angle $\theta'>0$. If $\theta'\ge\theta$ we are done, hence assume that $\theta'<\theta$. Then we have to show that the map $z\mapsto T(z)$ can be extended analytically from $\Sigma_{\theta'}$ to the open sector $\Sigma_\theta$.

\smallbreak
To this end we fix some $\phi\in(\theta',\theta)$ and consider the two projections on the complex plane %
$P_{\pm\phi}:\CC\to\CC$ onto $e^{\pm i\phi}\cdot\RR$ along $e^{\mp\phi i}\cdot\RR$. Then for $z\in\bar\Sigma_\phi$ we put $r_\pm(z):=e^{\mp i\phi}\cdot P_{\pm\phi}z\ge0$. Since $P_{\pm\phi}=1-P_{\mp\phi}$, this implies
$z=r_+(z)\cdot e^{i\phi}+r_-(z)\cdot e^{-i\phi}$. Using this representation of $z$ we define
\begin{equation}\label{eq:def-T(z)}
\tilde T:\bar\Sigma_\phi\to\sLX,\quad\tilde T(z):=T_{\phi}\bigl(r_+(z)\bigr)\cdot T_{-\phi}\bigl(r_-(z)\bigr).
\end{equation}
Since the resolvents of $A_{\pm\phi}$ commute also the semigroups $(T_{\pm\phi}(t))_{t\ge0}$ commute. Using this fact and the equations $r_{\pm}(z+w)=r_{\pm}(z)+r_{\pm}(w)$ it follows that
\[
\tilde T(z)\cdot\tilde T(w)=\tilde T(z+w)\quad\text{for all }z,w\in\bar\Sigma_\phi.
\]
Next we show that $(\tilde T(z))_{z\in\bar\Sigma_\phi}$  is strongly continuous on the closed sector $\bar\Sigma_\phi$. To this end choose $M,\omega>0$ such that $\|T_{\pm\phi}(t)\|\le M\cdot e^{\omega t}$ for all $t\ge0$. Then from the continuity of $r_\pm(\p)$ and the fact that $r_\pm(z)\le\|P_{\pm\phi}\|\cdot|z|$ we obtain for $x\in X$ and $z,w\in\bar\Sigma_\phi$
\begin{align*}
\bigl\|\tilde T(z)x-\tilde T(w)x\bigr\|
&\le\bigl\|T_{\phi}\bigl(r_+(z)\bigr)\cdot\bigl[T_{-\phi}\bigl(r_-(z)\bigr)-T_{-\phi}\bigl(r_-(w)\bigr)\bigr]x\bigr\|\\
&\qquad\qquad\qquad+
\bigl\|\bigl[T_{\phi}\bigl(r_+(z)\bigr)-T_{\phi}\bigl(r_+(w)\bigr)\bigr]\cdot T_{-\phi}\bigl(r_-(w)\bigr)x\bigr\|\\
&\le Me^{\omega\bigl(\|P_\phi\|+\|P_{-\phi}\|\bigr)\cdot|z|}\cdot\Bigl(\bigl\|T_{-\phi}\bigl(r_-(z)\bigr)x-T_{-\phi}\bigl(r_-(w)\bigr)x\bigr\|\\
&\hskip40mm+
\bigl\|T_{\phi}\bigl(r_+(z)\bigr)x-T_{\phi}\bigl(r_+(w)\bigr)x\bigr\|\Bigr)\\
&\to0\text{ as $w\to z$ in $\bar\Sigma_\phi$.}
\end{align*}
Hence, $(\tilde T(z))_{z\in\bar\Sigma_\phi}$  is strongly continuous as claimed. This implies in particular that for every $\psi\in[-\phi,\phi] $ the restriction
\[
\tilde T_\psi(t):=\tilde T\bigl(e^{i\psi}t\bigr),\quad t\ge0
\]
defines a $C_0$-semigroup on $X$. Next we compute its generator $\tilde A_\psi$. Let
\[
r_\pm:=r_\pm\bigl(e^{i\psi}\bigr),\quad\text{i.e.}\quad
e^{i\psi}=r_+\cdot e^{i\phi}+r_-\cdot e^{-i\phi}.
\]
Then by definition of $\tilde T(z)$ in \eqref{eq:def-T(z)} we have $\tilde T_\psi(t)=T_\phi(r_+ t)\cdot T_{-\phi}(r_- t)$. Hence, for $x\in D(A)$ we obtain
\begin{align*}
\frac{d}{dt}\,\tilde T_\psi(t)x
&=r_+A_\phi\cdot T_\phi(r_+ t)\cdot T_{-\phi}(r_- t)x+r_-A_{-\phi}\cdot T_\phi(r_+ t)\cdot T_{-\phi}(r_- t)x\\
&=\bigl(r_+e^{i\phi}A+r_-e^{-i\phi}A\bigr)\cdot\tilde T(z)x=e^{i\psi}A\cdot\tilde T(z)x.
\end{align*}
This implies $e^{i\psi}A\subseteq\tilde A_\psi$ and since $\rho(e^{i\psi}A)\cap\rho(\tilde A_\psi)\ne\emptyset$ we obtain $\tilde A_\psi=e^{i\psi}A=A_\psi$.
Since a generator uniquely determines the generated semigroup we conclude that
\[
T(z)=\tilde T(z)\quad\text{for all }z\in\Sigma_{\theta'},
\]
i.e., $(\tilde T(z))_{z\in\bar\Sigma_\phi}$ is a strongly continuous extension of  $(T(z))_{z\in\Sigma_{\theta'}\cup\{0\}}$. For this reason from now on we can drop the tilde and write $T(z)=\tilde T(z)$ for all $z\in\bar\Sigma_\phi$. 

\smallskip
Summing up, we showed that $A$ generates a semigroup $(T(z))_{z\in\bar\Sigma_\phi}$ which is strongly continuous on $\bar\Sigma_\phi$ and analytic on $\Sigma_{\theta'}$.
It remains to show that $(T(z))_{z\in\bar\Sigma_\phi}$ is analytic on $\Sigma_\phi$. To this end note that for each $r>0$ 
\begin{align*}
z&\mapsto T\bigl(re^{\pm i\phi}\bigr)\cdot T(z)=T\bigl(re^{\pm i\phi}+z\bigr)\quad\text{is analytic on }\Sigma_{\theta'}\quad\Longrightarrow\\
z&\mapsto T(z)\quad\text{is analytic on }re^{\pm i\phi}+\Sigma_{\theta'}.
\end{align*}
Since
\[
\Sigma_\phi=\bigcup_{r>0}\,\Bigl(re^{\pm i\phi}+\Sigma_{\theta'}\Bigr)
\]
this implies that $(T(z))_{z\in\bar\Sigma_\phi}$ is analytic on the whole open sector $\Sigma_\phi$ as claimed. Recall that $\phi\in(\theta',\theta)$ was arbitrary. Thus, from
\[
\Sigma_\theta=\bigcup_{\phi\in(-\theta,\theta)}\Sigma_\phi
\]
we finally conclude that $\Tt$ can be extended to an analytic semigroup $(T(z))_{z\in\bar\Sigma_\theta\cup\{0\}}$, i.e., is analytic of angle (at least) $\theta$.
\end{proof}

\begin{lem}\label{lem:F-D-analytic}
Let $A$ generate an analytic semigroup of angle $\theta\in(0,\frac\pi2)$. Moreover, let
$\phi\in(-\theta,\theta)$ and $\lambda>0$ such that $\gb(A-\lambda),\gb(A_\phi-\lambda)<0$. 
Then for all $\alpha\in(0,1]$ one has
\begin{equation}\label{eq:equal-D-Fav_A_phi}
D\bigl((\lambda-A)^\alpha\bigr)=D\bigl((\lambda-A_\phi)^\alpha\bigr)
\qquad\text{and}\qquad
\Fav^A_\alpha=\Fav^{A_\phi}_\alpha.
\end{equation}
\end{lem}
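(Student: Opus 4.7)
The proof rests on the algebraic identity
\[
\lambda - A_\phi \;=\; e^{i\phi}\bigl[(\lambda - A) + c\,\Id_X\bigr],
\qquad c := \lambda(e^{-i\phi} - 1),
\]
which is verified by direct computation using $A_\phi = e^{i\phi}A$. Writing $B := \lambda - A$, both $B$ and $B + c\,\Id_X$ are sectorial operators with $0$ in the resolvent set: for $B$ this follows from $\gb(A-\lambda) < 0$, and for $B + c\,\Id_X = e^{-i\phi}(\lambda - A_\phi)$ from $\gb(A_\phi - \lambda) < 0$. The holomorphic functional calculus then yields $(\lambda - A_\phi)^\alpha = e^{i\phi\alpha}(B + c\,\Id_X)^\alpha$, so the first equality in \eqref{eq:equal-D-Fav_A_phi} reduces to
\[
D(B^\alpha) \;=\; D\bigl((B + c\,\Id_X)^\alpha\bigr).
\]

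To handle this reduction, I would verify that the operator $(B + c\,\Id_X)^\alpha B^{-\alpha}$ is bounded with bounded inverse on $X$. Since $B$ and $B + c\,\Id_X$ commute trivially (they differ by a scalar), the joint functional calculus for commuting sectorial operators gives
\[
(B + c\,\Id_X)^\alpha B^{-\alpha} \;=\; \bigl(\Id_X + cB^{-1}\bigr)^\alpha.
\]
The bounded operator $\Id_X + cB^{-1}$ is invertible, with inverse $\Id_X - c(B + c\,\Id_X)^{-1}$, and I would check that $\sigma(\Id_X + cB^{-1})$ avoids $(-\infty,0]$ (so that $(\cdot)^\alpha$ is a well-defined holomorphic function on a neighbourhood of this spectrum): the sectorial angle of $B$ being strictly less than $\pi/2$ confines $\sigma(B^{-1})$ to a symmetric sector around the positive real axis with half-opening $\pi/2-\theta$, while multiplying by $c$ rotates this set by $\arg(c) = -\pi/2 - \phi/2$ (for $\phi>0$; the case $\phi<0$ is symmetric), and a short trigonometric computation using $|\phi|<\theta$ shows the resulting sector does not meet the nonpositive real axis. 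Hence $(\Id_X + cB^{-1})^\alpha$ is a bounded invertible operator and the domain equality follows.

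For the Favard space equality I would invoke the characterization of $\Fav^C_\alpha$ for generators $C$ of analytic semigroups as the real interpolation space $(X, D(C))_{\alpha, \infty}$, see \cite[Sect.~II.5.b]{EN:00}. Applied to $C = A - \lambda$ and $C = A_\phi - \lambda$, the right-hand side depends only on the Banach space pair $(X, D(C))$; since $A_\phi = e^{i\phi}A$ implies $D(A) = D(A_\phi)$ with identical graph norms, one concludes $\Fav^A_\alpha = \Fav^{A_\phi}_\alpha$. Here I also exploit the translation invariance $\Fav^A_\alpha = \Fav^{A - \lambda}_\alpha$ already noted at the very beginning of the proof of Theorem~\ref{lem:A^alpha-admiss-pair}.

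The main obstacle will be the rigorous verification of the joint functional calculus identity $(B + c\,\Id_X)^\alpha B^{-\alpha} = (\Id_X + cB^{-1})^\alpha$ and the spectral computation above when $c$ is complex. The standard Dunford--Balakrishnan machinery handles both once one has sectoriality of $B$ with angle strictly less than $\pi/2$; this angle bound is crucial, as it is exactly what allows one to keep the spectrum of $\Id_X + cB^{-1}$ away from the branch cut of $z \mapsto z^\alpha$ for every admissible $\phi \in (-\theta,\theta)$.
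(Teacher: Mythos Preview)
Your approach is correct but diverges from the paper's in both parts, with an interesting trade-off.

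For the Favard spaces, your argument via real interpolation is cleaner than the paper's. The paper instead works directly with the resolvent characterisation $x\in\Fav^A_\alpha\iff\sup_{\lambda>0}\|\lambda^\alpha AR(\lambda,A)x\|<\infty$ and proves $\Fav^A_\alpha\subseteq\Fav^{A_\phi}_\alpha$ by inserting the identity $R(\lambda,A_\phi)=e^{-i\phi}R(e^{-i\phi}\lambda,A)$ together with the resolvent equation; the reverse inclusion follows by symmetry. Your one-line observation that $\Fav^C_\alpha=(X,D(C))_{\alpha,\infty}$ depends only on the pair $(X,D(C))$, combined with $D(A)=D(A_\phi)$, is more conceptual and buys the result immediately. (The interpolation identity is standard, though the specific reference \cite[Sect.~II.5.b]{EN:00} may not state it in quite that form.)

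For the fractional-power domains the paper is considerably more direct. It first reduces, without loss of generality, to $\lambda=0$ (using that $D((\mu-A)^\alpha)$ is independent of the admissible shift $\mu$). Then $-A_\phi=e^{i\phi}(-A)$ with \emph{no} additive constant, and a single contour deformation in the Balakrishnan integral defining $(-A)^{-\alpha}$, together with $R(\mu,A_\phi)=e^{-i\phi}R(e^{-i\phi}\mu,A)$, yields $(-A_\phi)^{-\alpha}=e^{-i\phi\alpha}(-A)^{-\alpha}$ outright. Your route keeps $\lambda>0$, which introduces the shift $c=\lambda(e^{-i\phi}-1)$ and then forces you to prove two things: the rotation identity $(e^{i\phi}(B+c))^\alpha=e^{i\phi\alpha}(B+c)^\alpha$ (itself a contour-deformation argument of the same type the paper uses) \emph{and} the domain stability $D(B^\alpha)=D((B+c\,\Id)^\alpha)$ via the bounded functional calculus for $\Id+cB^{-1}$. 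So you end up doing strictly more work.

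One caution on that extra step: the claim that $\sigma(B^{-1})$ lies in a sector of half-opening $\pi/2-\theta$ does not follow merely from $\gb(A-\lambda)<0$; it requires $\lambda$ large relative to the sectoriality constants of $A$. Since the conclusion is $\lambda$-independent this is easily repaired (just take $\lambda$ large enough), but your ``short trigonometric computation'' should acknowledge it.
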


\begin{proof}
Note that by the previous result $A_\phi$ generates an analytic semigroup. Without loss of generality we assume that $\lambda=0$.

\smallbreak
To show the first equality in \eqref{eq:equal-D-Fav_A_phi} fix some $\alpha\in(0,1)$. Then by the definition of $(-A)^{-\alpha}$ (see, e.g. \cite[Def.~II.5.25]{EN:00}), the equality 
\begin{equation}\label{eq:Res-Aphi}
R(\lambda,A_\phi)=e^{-i\phi}R\bigl(e^{-i\phi}\lambda,A\bigr)
\end{equation}
and Cauchy's integral theorem it follows that
$
(-A_\phi)^{-\alpha}=e^{-i\phi\alpha}\cdot(-A)^{-\alpha}.
$
This implies 
\[D((-A)^\alpha)=\rg((-A)^{-\alpha})=\rg((-A_\phi)^{-\alpha})=D((-A_\phi)^\alpha)\]
for $\alpha\in(0,1)$ while for $\alpha=1$ it is obviously satisfied.

\smallskip
Next we show that $\Fav^A_\alpha\subseteq\Fav^{A_\phi}_\alpha$ for $\alpha\in(0,1]$. Let $x\in\Fav^A_\alpha$. Then by \eqref{eq:Res-Aphi}, the resolvent equation, the Hille--Yosida theorem for $A_\phi$ and \cite[Prop.~II.5.12]{EN:00} we conclude that
\begin{align*}
\sup_{\lambda>0}\bigl\|\lambda^\alpha A_\phi R(\lambda,A_\phi)x\bigr\|
&=\sup_{\lambda>0}\bigl\|\lambda^\alpha A\bigl(R(e^{-i\phi}\lambda,A)-\RlA\bigr)x+\lambda^\alpha A R(\lambda,A)x\bigr\|\\
&\le\sup_{\lambda>0}\bigl\|(1-e^{-i\phi})\lambda R(e^{-i\phi}\lambda,A)\lambda^\alpha A\RlA x\bigr\|+\sup_{\lambda>0}\bigl\|\lambda^\alpha A R(\lambda,A)x\bigr\|\\
&\le\l(1+\sup_{\lambda>0}\bigl\|(1-e^{-i\phi})\lambda R(\lambda,A_\phi)\bigr\|\r)\cdot\sup_{\lambda>0}\bigl\|\lambda^\alpha A\RlA x\bigr\|
<+\infty.
\end{align*}
Again by \cite[Prop.~II.5.12]{EN:00} this implies that $x\in\Fav^{A_\phi}_\alpha$, hence $\Fav^A_\alpha\subseteq\Fav^{A_\phi}_\alpha$. In order to show the converse inclusion note that $A=e^{-i\phi}A_\phi$. The assertion then follows as above by interchanging the roles of $A$ and $A_\phi$ and  substituting $\phi$ by $-\phi$.
\end{proof}

We conclude this section by the following observation which sometimes helps to check the admissibility of $B$ or $C$.

\begin{rem}
In the situation of Theorem~\ref{lem:A^alpha-admiss-pair} the implications (i)$\imp$(b) and (ii)$\imp$(c) hold.
\end{rem}

\section{Examples}
\label{sec:expls}

\subsection{The Generic Example}
\label{subsec:GE}
Many concrete examples fit into the following general framework which generalizes boundary perturbations in the sense of Greiner, cf. \cite{Gre:87}.

\smallbreak
We start with a Banach space $X$ and a  linear ``maximal operator''\footnote{``Maximal'' in the sense of a ``big'' domain, e.g., a differential operator without boundary conditions.} $A_m:D(A_m)\subseteq X\to X$.
In order to single out a restriction $A$ of $A_m$ we take a Banach spaces $\dX$, called ``space of boundary conditions'', and a linear ``boundary operator''
$L:D(A_m)\to\dX$
and define
\begin{equation}\label{eq:def-A-kerK}
A\subseteq A_m,\quad D(A)=\bigl\{x\in D(A_m): Lx=0\bigr\}=\ker(L).
\end{equation}
Next we perturb $A$ in the following way.
For a Banach space $Z$ satisfying $D(A_m)\subseteq Z$ and $\XeA\inc Z\inc X$, and operators $P\in\sL(Z,X)$ and $\Phi\in\sL(Z,\dX)$ we consider
\begin{equation}\label{eq:G-one-dim}
G:=A_m+P,\quad D(G):=\bigl\{x\in D(A_m): Lx=\Phi x\bigr\}.
\end{equation}
Hence, $G$ can be considered as a twofold perturbation of $A$,
\begin{itemize}
\item by the operator $P$ to change its action, and
\item by the operator $\Phi$ to change its domain.
\end{itemize}
We note that in \cite{Gre:87} the operator $\Phi:X\to\dX$ has to be bounded and $P=0$.
Below we will show that this example fits into our framework for unbounded operators $\Phi$ and $P$, too. To this end we first make the following

\begin{asu}\label{assump:Dir-op-ex}
\begin{enumerate}[(a)]
\item $A$ generates a $C_0$-semigroup $\Tt$ on $X$.
\item For some $\mu\in\CC$ the restriction
\[
L|_{\ker(\mu-A_m)}:\ker(\mu-A_m)\to\dX
\]
is invertible with bounded inverse
\begin{equation*}
L_\mu:=\bigl(L|_{\ker(\mu-A_m)}\bigr)^{-1}\in
\sL(\dX,X).
\end{equation*}
\end{enumerate}
\end{asu}

Next we elaborate on the so-called abstract \emph{Dirichlet operator} $L_\mu$ which plays a crucial role in this approach.

\begin{prop}\label{prop:L_lambda-rhoA}
Let Assumption~\ref{assump:Dir-op-ex}.(b) be satisfied. Then for all $\lambda\in\rho(A)$
\[
L|_{\ker(\lambda-A_m)}:\ker(\lambda-A_m)\to\dX
\]
is invertible with bounded inverse given by
\begin{equation}\label{eq:def-L_mu}
L_\lambda=(\mu-A)\RlA L_\mu\in
\sL(\dX,X).
\end{equation}

\end{prop}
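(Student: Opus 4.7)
The plan is to verify the formula $L_\lambda = (\mu-A)R(\lambda,A)L_\mu$ directly: show that it maps $\partial X$ into $\ker(\lambda-A_m)$, that $L$ sends it back to the identity, and that $L|_{\ker(\lambda-A_m)}$ is injective. Boundedness will then be a free bonus since each factor is bounded.

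First I would pick $y\in\dX$, set $x_\mu:=L_\mu y\in\ker(\mu-A_m)$ (so $A_m x_\mu=\mu x_\mu$ and $Lx_\mu=y$), and consider
\[
x_\lambda:=(\mu-A)\RlA x_\mu = x_\mu+(\mu-\lambda)\RlA x_\mu,
\]
where I used the identity $(\mu-A)\RlA=I+(\mu-\lambda)\RlA$. Applying $\lambda-A_m$ and using that $\RlA x_\mu\in D(A)\subseteq D(A_m)$ with $A_m$ agreeing with $A$ on $D(A)$, I compute
\[
(\lambda-A_m)x_\lambda=(\lambda-\mu)x_\mu+(\mu-\lambda)(\lambda-A)\RlA x_\mu=(\lambda-\mu)x_\mu+(\mu-\lambda)x_\mu=0,
\]
so $x_\lambda\in\ker(\lambda-A_m)$. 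Applying $L$ and using $\RlA x_\mu\in D(A)=\ker L$ gives
\[
Lx_\lambda=Lx_\mu+(\mu-\lambda)L\RlA x_\mu=y+0=y,
\]
which is the desired surjectivity and the identity $L\circ[(\mu-A)\RlA L_\mu]=\Id_{\dX}$.

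For injectivity of $L|_{\ker(\lambda-A_m)}$, I would take $x\in\ker(\lambda-A_m)$ with $Lx=0$; then $x\in D(A)$ and $(\lambda-A)x=0$, and invertibility of $\lambda-A$ forces $x=0$. Boundedness of $L_\lambda$ then follows at once from $(\mu-A)\RlA=I+(\mu-\lambda)\RlA\in\sL(X)$ and $L_\mu\in\sL(\dX,X)$. I do not anticipate a genuine obstacle here; the only subtle point is to remember, when evaluating $(\lambda-A_m)$ on terms containing $\RlA x_\mu$, that this element lies in $D(A)$ and hence $A_m$ coincides with $A$ on it, so the resolvent identity may be applied without leaving the domain of $A_m$.
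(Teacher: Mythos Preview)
Your proof is correct and follows essentially the same route as the paper's: rewrite $(\mu-A)\RlA L_\mu$ as $(\Id+(\mu-\lambda)\RlA)L_\mu$, check that its range lies in $\ker(\lambda-A_m)$ and that $L$ sends it to the identity on $\dX$, then verify injectivity via $\lambda\in\rho(A)$. The only cosmetic difference is that you work element-wise while the paper argues at the operator level.
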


\begin{proof} Let $\tilde L_\lambda\in\sL(\dX,X)$ be the operator defined by the right-hand-side of \eqref{eq:def-L_mu}. Then the identity $\tilde L_\lambda=\bigl(\Id+(\mu-\lambda)\RlA\bigr) L_\mu$ implies that $\rg(\tilde L_\lambda)\subseteq\ker(\mu-A_m)+D(A)\subseteq D(A_m)$ and $L\tilde L_\lambda=Id_\dX$. Moreover, for $x\in\dX$
\begin{align*}
(\lambda-A_m)\tilde L_\lambda x
&=(\lambda-A_m)L_\mu x+(\mu-\lambda)(\lambda-A_m)\RlA L_\mu x\\
&=(\lambda-\mu)L_\mu x+(\mu-\lambda) L_\mu x=0,
\end{align*}
i.e., $\rg(\tilde L_\lambda)\subseteq\ker(\lambda-A_m)$.
Summing up this proves that $L:\ker(\lambda-A_m)\to\dX$ is surjective with right-inverse $\tilde L_\lambda$. To show injectivity assume that $x\in\ker(\lambda-A_m)\cap\ker(L)$. Then $x\in D(A)$ and $(\lambda-A)x=0$ which implies $x=0$ since $\lambda\in\rho(A)$.
\end{proof}

Note that by the previous result $L_\lambda=\RlAme(\mu-\Ame) L_\mu$, hence
the operator
\begin{equation}\label{eq:def-LA}
\LA:=(\mu-\Ame)L_\mu=(\lambda-\Ame)L_\lambda\in\sL(\dX,\XmeA)
\end{equation}
is independent of $\lambda\in\rho(A)$.

\smallskip
The following result gives sufficient conditions implying Assumption~\ref{assump:Dir-op-ex}.(b). For a proof we refer to \cite[Lem.~1.2]{Gre:87} and \cite[Lem.~2.2]{CENN:03}.

\begin{lem}\label{lem:Dir-op}
If $A_m$ or $\binom{A_m}{L}$ is closed and $L$  is surjective, then for every $\lambda\in\rho(A)$  Assumption~\ref{assump:Dir-op-ex}.(b) is satisfied. 
\end{lem}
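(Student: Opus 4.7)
The plan is to verify Assumption~\ref{assump:Dir-op-ex}.(b) directly for every $\lambda\in\rho(A)$, first by proving that the restriction $L|_{\ker(\lambda-A_m)}:\ker(\lambda-A_m)\to\dX$ is an algebraic bijection, and then by invoking the closed graph theorem to obtain boundedness of its inverse.

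For injectivity I would argue that if $x\in\ker(\lambda-A_m)\cap\ker(L)$, then by \eqref{eq:def-A-kerK} one has $x\in D(A)$ with $(\lambda-A)x=0$; since $\lambda\in\rho(A)$, this forces $x=0$. For surjectivity, given $y\in\dX$, I would use the surjectivity of $L$ to select some $\tilde x\in D(A_m)$ with $L\tilde x=y$ and then set
\[
x:=\tilde x-\RlA(\lambda-A_m)\tilde x.
\]
Because $\RlA(\lambda-A_m)\tilde x\in D(A)\subseteq\ker(L)$ one gets $Lx=y$, while a short computation using $(\lambda-A)\RlA=\Id_X$ together with $A\subseteq A_m$ yields $(\lambda-A_m)x=0$, so that $x\in\ker(\lambda-A_m)$ is the required preimage.

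To show that the algebraic inverse $L_\lambda:\dX\to X$ is bounded I would apply the closed graph theorem. If $y_n\to y$ in $\dX$ and $x_n:=L_\lambda y_n\to x$ in $X$, then $x_n\in D(A_m)$ satisfy $A_m x_n=\lambda x_n\to\lambda x$ in $X$ and $Lx_n=y_n\to y$ in $\dX$. Assuming $\binom{A_m}{L}:D(A_m)\subseteq X\to X\times\dX$ is closed, one concludes $x\in D(A_m)$ with $A_m x=\lambda x$ and $Lx=y$; hence $x\in\ker(\lambda-A_m)$ and $L_\lambda y=x$, so $L_\lambda$ has closed graph as a map $\dX\to X$ and is therefore continuous. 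The other alternative in the hypothesis, namely closedness of $A_m$ itself, corresponds to the standard boundary-operator setting in which $L$ is continuous on the Banach space $[D(A_m)]$, so that $\binom{A_m}{L}$ is automatically closed and the same argument applies.

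The main obstacle I expect is precisely the passage to the limit for the boundary values in the closed graph step: closedness of $A_m$ alone controls only the $X$-valued component, and the combined closedness of $\binom{A_m}{L}$ is genuinely needed in order to conclude $Lx=y$ from $Lx_n\to y$. Once this point is settled, boundedness of $L_\lambda$—and thus the full statement of Lemma~\ref{lem:Dir-op}, which by Proposition~\ref{prop:L_lambda-rhoA} also yields the explicit formula for $L_\lambda$ in terms of $L_\mu$—follows immediately.
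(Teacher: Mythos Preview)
The paper gives no proof of its own here but defers to \cite[Lem.~1.2]{Gre:87} and \cite[Lem.~2.2]{CENN:03}; your argument is precisely the standard one found in those references. The injectivity step, the surjectivity construction $x:=\tilde x-\RlA(\lambda-A_m)\tilde x$, and the closed-graph argument under the hypothesis that $\binom{A_m}{L}$ is closed are all correct.

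One comment on the case ``$A_m$ closed'': you rightly observe that closedness of $A_m$ alone does not let you pass to the limit in the boundary values, and you resolve this by appealing to Greiner's standard setting where $L:[D(A_m)]\to\dX$ is assumed continuous. That is indeed the assumption in \cite{Gre:87}, and with it $\binom{A_m}{L}$ is closed so your reduction goes through; but note that this continuity of $L$ is a genuine extra hypothesis, not a consequence of $A_m$ being closed. So your proof is complete for the ``$\binom{A_m}{L}$ closed'' alternative, and for the ``$A_m$ closed'' alternative it is complete modulo the (standard, but unstated in the lemma) continuity of $L$ on $[D(A_m)]$---which is exactly the situation in the cited source.
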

Using the operator $\LA$  from \eqref{eq:def-LA} we obtain the following representation of $G$ from \eqref{eq:G-one-dim}.

\begin{lem}\label{lem:G-as-SWp}
We have
\begin{equation}\label{eq:rep-M-K-Theta}
G=\bigl(\Ame+P+\LA\cdot\Phi\bigr)\big|_{X}.
\end{equation}
\end{lem}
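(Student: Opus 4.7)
The plan is to verify both inclusions in the operator equality $G=(\Ame+P+\LA\cdot\Phi)|_{X}$ directly from the definitions. The crucial intermediate identity is that for every $x\in D(A_m)$ satisfying the perturbed boundary condition $Lx=\Phi x$ one has
\[
\Ame x+\LA\,\Phi x=A_m x,
\]
so that after adding $Px\in X$ the right-hand side reduces to $Gx\in X$; in particular such $x$ lies automatically in the domain of $(\Ame+P+\LA\cdot\Phi)|_X$.

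To establish this identity I would exploit the decomposition $x=(x-L_\lambda\Phi x)+L_\lambda\Phi x$ for some fixed $\lambda\in\rho(A)$. By Proposition~\ref{prop:L_lambda-rhoA}, $L_\lambda\Phi x\in\ker(\lambda-A_m)\subseteq D(A_m)$ satisfies $LL_\lambda\Phi x=\Phi x$ and $A_m L_\lambda\Phi x=\lambda L_\lambda\Phi x$. The boundary condition $Lx=\Phi x$ then gives $L(x-L_\lambda\Phi x)=0$, i.e. $x-L_\lambda\Phi x\in D(A)$. Applying $\Ame$ (which coincides with $A$ on $D(A)$) yields $\Ame(x-L_\lambda\Phi x)=A_m x-\lambda L_\lambda\Phi x$. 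Rearranging and substituting $\LA=(\lambda-\Ame)L_\lambda$ from \eqref{eq:def-LA} proves the claim. This inclusion direction is the easier one since both the $D(A_m)$-membership and the boundary condition are supplied by hypothesis.

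For the reverse inclusion, I would take $x\in Z$ with $(\Ame+P+\LA\Phi)x\in X$. Subtracting $Px\in X$ gives $(\Ame+\LA\Phi)x\in X$, which, after the same manipulation as above, reads $\Ame(x-L_\lambda\Phi x)+\lambda L_\lambda\Phi x\in X$. Since $L_\lambda\Phi x\in X$, this forces $\Ame(x-L_\lambda\Phi x)\in X$. Combined with the fact that $x-L_\lambda\Phi x\in X$ (both summands lie in $X$ because $Z\inc X$), the standard identification of $D(A)$ inside $X$ as $\{y\in X:\Ame y\in X\}$ yields $x-L_\lambda\Phi x\in D(A)\subseteq D(A_m)$, hence $x\in D(A_m)$. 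Finally, applying $L$ to the decomposition gives $Lx=LL_\lambda\Phi x=\Phi x$, so $x\in D(G)$.

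No genuine obstacle is anticipated: the argument is algebraic, with the single key ingredient being the Dirichlet-operator identity $\LA=(\lambda-\Ame)L_\lambda$, which bridges the ``inner'' and ``extrapolated'' viewpoints and allows the boundary condition to be absorbed into the action of $\Ame$ on the full space $Z$. The only mild subtlety is the characterization of $D(A)$ inside $X$ via the extrapolation space $\XmeA$, which is classical, see \cite[Sect.~II.5.a]{EN:00}.
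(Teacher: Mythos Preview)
Your proof is correct and follows essentially the same route as the paper's: both arguments rest on the decomposition $x=(x-L_\lambda\Phi x)+L_\lambda\Phi x$, the identity $\LA=(\lambda-\Ame)L_\lambda$, and the characterization of $D(A)$ as $\{y\in X:\Ame y\in X\}$. The paper packages the domain equality as a single chain of equivalences rather than two separate inclusions, but the substance is identical.
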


\begin{proof}
Denote by $\tilde G$ the operator defined by the right-hand-side of \eqref{eq:rep-M-K-Theta} and fix some $\lambda\in\rho(A)$. Then for $x\in Z$ we have
\begin{align}\notag
x\in D(\tilde G)&\iff (\Ame-\lambda)\bigl(\Id-L_\lambda\Phi\bigr)x+(P+\lambda)x\in X\\\notag
&\iff \bigl(\Id-L_\lambda\Phi\bigr)x\in D(A)=\ker L\\
&\iff Lx=\Phi x\label{eq:iff-D(G)}\\
&\iff x\in D(G),\notag
\end{align}
where in \eqref{eq:iff-D(G)} we used that $ x=\bigl(\Id-L_\lambda\Phi\bigr)x+L_\lambda\Phi x\in D(A)+\ker(\lambda-A_m)\subseteq D(A_m)$.
Moreover, for $x\in D(G)$ we obtain
\begin{align*}
\tilde Gx&=(\Am-\lambda)\bigl(\Id-L_\lambda\Phi\bigr)x+(P+\lambda)x\\
&=(\Am-\lambda)x +(P+\lambda)x\\
&=(\Am+P)x=Gx,
\end{align*}
hence $G=\tilde G$ as claimed.
\end{proof}

In order to represent $G$ given in  \eqref{eq:rep-M-K-Theta} as $\ABC$ like in \eqref{eq:def-A_BFC}, we define the product space
\begin{equation}\label{eq:def-U}
U:=X\times\dX
\end{equation}
and the operators
\begin{equation}\label{eq:def-B-C}
B:=\bigl(Id_X,\LA\bigr)\in\sL(U,\XmeA)
\quad\text{and}\quad
C:=\tbinom{P}{\Phi}\in\sL(Z,\Y).
\end{equation}
Then a simple computation shows the following.

\begin{lem} 
The triple $(A,B,C)$ given by \eqref{eq:def-A-kerK}, \eqref{eq:def-B-C} is compatible.
Moreover, $G$ in \eqref{eq:G-one-dim} can be written as $G=\ABC$.
\end{lem}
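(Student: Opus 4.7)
The plan is to prove the two assertions by direct computation; both reduce to unwinding the definitions in \eqref{eq:def-U}--\eqref{eq:def-B-C} together with the Dirichlet-operator identity \eqref{eq:def-LA} and the representation of $G$ already provided by Lemma~\ref{lem:G-as-SWp}.

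First I would establish compatibility. Fix $\lambda\in\rho(A)$ and take an arbitrary $u=(x,y)\in X\times\dX=\U$. By the definition of $B$ in \eqref{eq:def-B-C} we have $Bu=x+\LA y$, so
\[
\RlAme Bu=\RlA x+\RlAme\,\LA y.
\]
The first term lies in $D(A)=\XeA\inc Z$ because $\RlAme$ restricts to $\RlA$ on $X$. For the second term I would invoke the $\lambda$-independent representation $\LA=(\lambda-\Ame)L_\lambda$ from \eqref{eq:def-LA}, which collapses the expression to $L_\lambda y\in\ker(\lambda-\Am)\subseteq D(\Am)\subseteq Z$ by Proposition~\ref{prop:L_lambda-rhoA} and the standing hypothesis on $Z$. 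Hence $\rg(\RlAme B)\subseteq Z$, which is exactly condition~(i) in Theorem~\ref{thm:main-gen}.

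For the identification $G=\ABC$ I would simply compose $B$ and $C$ on $Z$ to read off $BC=P+\LA\Phi$, so that $(\Ame+BC)|_X$ coincides with $(\Ame+P+\LA\cdot\Phi)|_X$. By Lemma~\ref{lem:G-as-SWp} the latter already equals $G$ as an action; it remains to match the domains. For the domain of $\ABC$ one has $x\in D(\ABC)$ iff $x\in Z$ and $\Ame x+Px+\LA\Phi x\in X$. Using $\LA\Phi x=-(\Ame-\lambda)L_\lambda\Phi x$ (again from \eqref{eq:def-LA}) this rewrites as $(\Ame-\lambda)(\Id-L_\lambda\Phi)x+(P+\lambda)x\in X$, which is precisely the equivalent characterization of $Lx=\Phi x$ obtained in the proof of Lemma~\ref{lem:G-as-SWp}. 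Thus $D(\ABC)=D(G)$ and the identification is complete. I do not expect a substantive obstacle here: both statements are bookkeeping, and the only subtlety is to treat $BC$ as an operator into $\XmeA$ (rather than $X$) before restriction to $X$, which is precisely the reason for introducing $\Ame$ and $\LA$ in the first place.
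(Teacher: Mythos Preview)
Your proposal is correct and is exactly the ``simple computation'' the paper alludes to (the paper gives no proof beyond that phrase). One minor redundancy: once you have $BC=P+\LA\Phi$ on $Z$, Lemma~\ref{lem:G-as-SWp} already gives $G=(\Ame+P+\LA\Phi)|_X$ with the domain $\{x\in Z:(\Ame+P+\LA\Phi)x\in X\}$, which is literally the definition of $\ABC$ in \eqref{eq:def-A_BFC}; so re-deriving the domain equivalence is not needed.
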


By applying Theorem~\ref{thm:main-gen} to this situation we obtain the following result.

\begin{cor}\label{cor:SW-gen}
Assume that for some $1\le p < +\infty$ the pairs
$(\LA,P)$ and $(\LA,\Phi)$ are jointly $p$-admissible for $A$ and that there exists  $t>0$ such that $1\in\rho(\Ft^{(A,B,C)})$ where
\begin{equation*} 
\Ft^{(A,B,C)}=
\begin{pmatrix}
\Ft^{(A,Id_X,P)}&\Ft^{(A,\LA,P)}\\
\Ft^{(A,Id_X,\Phi)}&\Ft^{(A,\LA,\Phi)}
\end{pmatrix}
\in\sL\bigl(X\times\dX\bigr).
\end{equation*}
Then $G$ given by \eqref{eq:G-one-dim} generates a $C_0$-semigroup on $X$. Here the condition $1\in\rho(\Ft^{(A,B,C)})$ is in particular satisfied if $p>1$ and $1\in\rho(\Ft^{(A,\LA,\Phi)})$.
\end{cor}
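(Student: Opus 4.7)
My plan is to apply Theorem~\ref{thm:main-gen} to the triple $(A,B,C)$ constructed in \eqref{eq:def-B-C} and then invoke the preceding (unnumbered) lemma to identify $G=\ABC$. Of the five hypotheses of that theorem, most are already available: (i) is the compatibility assertion of the preceding lemma; (ii) holds because $Id_X$ is trivially a $p$-admissible control operator and $\LA$ is $p$-admissible by either joint admissibility hypothesis; (iii) holds because both $P$ and $\Phi$ are $p$-admissible observation operators by the same hypotheses; and (v) is precisely the assumption $1\in\rho(\Ft^{(A,B,C)})$. The block decomposition displayed in the statement reduces (iv) to four separate bounds, of which the two $\LA$-columns are exactly what the two joint admissibility hypotheses deliver. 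Theorem~\ref{thm:main-gen} then produces a $C_0$-semigroup generated by $\ABC$, and the preceding lemma identifies $\ABC=G$.

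It remains to control the two $Id_X$-blocks $F_t^{(A,Id_X,P)}$ and $F_t^{(A,Id_X,\Phi)}$ in~(iv). For $u\in\WnzpntU$ and $r\in(0,t]$, Hölder's inequality gives
\[
\Bigl\|P\int_0^r T(r-s)u(s)\ds\Bigr\|^p\le r^{p-1}\int_0^r\bigl\|PT(r-s)u(s)\bigr\|^p\ds,
\]
and integrating in $r$, using Fubini, and applying the $p$-admissibility estimate $\int_0^t\|PT(\sigma)x\|^p\,d\sigma\le M\|x\|^p$ with $x=u(s)$ yields
\[
\bigl\|F_t^{(A,Id_X,P)}u\bigr\|_p\le M'\cdot t^{(p-1)/p}\,\|u\|_p,
\]
and analogously for $\Phi$. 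This controls the $Id_X$-blocks for any $p\ge 1$ and, crucially, produces a \emph{vanishing} factor $t^{(p-1)/p}$ when $p>1$.

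For the additional claim, assume $p>1$ and $1\in\rho(\Ft^{(A,\LA,\Phi)})$. The vanishing just observed means that for small $t>0$ the first column of $\Ft^{(A,B,C)}$ has arbitrarily small operator norm, so that $1-\Ft^{(A,B,C)}$ is a small perturbation of the block upper triangular operator having diagonal entries $Id$ and $1-\Ft^{(A,\LA,\Phi)}$. The latter is invertible by hypothesis (at some $t_0>0$ and, by the Volterra/causal structure of $\Ft^{(A,\LA,\Phi)}$, also for all $t\in(0,t_0]$), so a Neumann series argument delivers $1\in\rho(\Ft^{(A,B,C)})$ for some small $t>0$, which is all that condition~(v) requires.

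The main obstacle I expect is the Hölder--Fubini estimate of the second paragraph. Although $Id_X$ is a bounded control operator, $P$ is only bounded from $Z$ into $X$, so one cannot pull $\|P\|_{\sL(X)}$ outside the integral. Instead one has to invoke the $p$-admissibility of $P$ \emph{as observation operator}, combined with a reversal of the order of integration via Fubini; this switch is both what makes the estimate work and what produces the positive power of $t$ that drives the feedback argument in the case $p>1$.
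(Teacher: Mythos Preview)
Your plan is correct and matches the paper's approach: the paper's one-line proof simply invokes Lemma~\ref{lem:norm-sFt}.(b), which packages precisely your H\"older--Fubini estimate (yielding $\|\Ft^{(A,Id_X,\star)}\|\le M\,t^{1-1/p}$) and thereby supplies both the missing $Id_X$-blocks of condition~(iv) and the vanishing first column that drives the $p>1$ feedback claim via the perturbation/Neumann-series step you describe. One technical caution: your displayed inequality presupposes $T(r-s)u(s)\in Z$ so that $PT(r-s)u(s)$ is defined, which fails for generic $u(s)\in X$ even when $u\in\WnzpntU$; the standard remedy is to carry out the estimate first for $u$ taking values in $D(A)$ (where condition~(iii) of Theorem~\ref{thm:main-gen} applies pointwise) and then extend by density in $\rL^p$.
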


\begin{proof}
We only have to verify the assertion concerning the invertibility of $Id-\Ft^{(A,B,C)}$ for $p>1$. This, however, follows immediately from Lemma~\ref{lem:norm-sFt}.(b).
\end{proof}

We note that Corollary~\ref{cor:SW-gen} can be regarded as an operator theoretic extension of the main result  in \cite{HMR:15} (Theorem~4.1) which is formulated in the language of systems theory and where $P=0$ is assumed.

\smallbreak
If in the above situation $A$ generates an analytic semigroup, then from Theorem~\ref{lem:A^alpha-admiss-pair} and Lemma~\ref{lem:char-1-admiss} we obtain the following simplification.

\begin{cor}\label{cor:gen-expl-an}
Let $A$ generate an analytic semigroup of angle $\theta\in(0,\frac\pi2]$ on $X$. If there exist $\lambda>\gb(A)$, $\beta\ge0$ and $\gamma>0$ such that
\begin{enumerate}[(i)]
\item $\rg(L_\lambda)\subseteq\Fav^A_{1-\beta}$,
\item $[D((\lambda-A)^\gamma)]\inc Z$,
\item $\beta+\gamma<1$,
\end{enumerate}
then $G$ given by \eqref{eq:G-one-dim} generates an analytic semigroup of angle $\theta$  on $X$.
\end{cor}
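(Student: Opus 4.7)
The plan is to reduce the corollary to Theorem~\ref{lem:A^alpha-admiss-pair} applied to the triple $(A,B,C)$ constructed in \eqref{eq:def-A-kerK} and \eqref{eq:def-B-C}, using the representation $G=\ABC$ already established in the preceding lemma. Since assumptions (ii) and (iii) of the corollary coincide verbatim with conditions (ii) and (iii) of Theorem~\ref{lem:A^alpha-admiss-pair}, the only substantive step is to verify condition (i) of that theorem, namely
\[
\rg\bigl(\RlAme B\bigr)\subseteq \Fav^A_{1-\beta}
\]
for some (hence any, after shifting) $\lambda\in\rho(A)$ with $\lambda>\gb(A)$.

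To verify this inclusion I would first decompose $B\in\sL(U,\XmeA)$ with $U=X\times\dX$ as $B(x,y)=x+\LA y$. Applying $\RlAme$ and using the defining identity $\LA=(\lambda-\Ame)L_\lambda$ from \eqref{eq:def-LA}, one obtains
\[
\RlAme B(x,y)=\RlA x+L_\lambda y,
\]
so that $\rg(\RlAme B)=D(A)+\rg(L_\lambda)$. Now $D(A)\subseteq\Fav^A_1\subseteq\Fav^A_{1-\beta}$ by the monotonicity of the Favard scale (cf.~\cite[Sect.~II.5.b]{EN:00}) together with $\beta\ge0$, and $\rg(L_\lambda)\subseteq\Fav^A_{1-\beta}$ by hypothesis (i) of the corollary. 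This yields the desired inclusion.

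With all three conditions of Theorem~\ref{lem:A^alpha-admiss-pair} in force, the conclusion of that theorem gives at once that $\ABC=(\Ame+BC)|_X$ generates an analytic semigroup of angle $\theta$ on $X$, and by the preceding lemma this operator coincides with $G$ from \eqref{eq:G-one-dim}. The only potential pitfall is the handling of the parameter $\lambda$: hypothesis (i) is stated for one specific $\lambda>\gb(A)$, whereas Theorem~\ref{lem:A^alpha-admiss-pair} also only requires some such $\lambda$, so a single resolvent identity via \eqref{eq:def-L_mu} is enough to transfer the range condition across different choices of $\lambda$ if needed. Thus the proof reduces cleanly to a short algebraic identity plus an application of the main theorem, and I do not expect any serious obstacle beyond bookkeeping of the decomposition of $B$.
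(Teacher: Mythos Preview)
Your proposal is correct and follows essentially the same approach as the paper: the corollary is stated without an explicit proof, merely noting that it follows from Theorem~\ref{lem:A^alpha-admiss-pair} and Lemma~\ref{lem:char-1-admiss}. Your direct verification of condition~(i) of Theorem~\ref{lem:A^alpha-admiss-pair} via the decomposition $\RlAme B(x,y)=\RlA x+L_\lambda y$ together with $D(A)\subseteq\Fav^A_1\subseteq\Fav^A_{1-\beta}$ is exactly the intended argument; the paper's reference to Lemma~\ref{lem:char-1-admiss} serves mainly to handle the transfer of the range condition between different choices of~$\lambda$, which you correctly flag as a routine matter via~\eqref{eq:def-L_mu}.
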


\begin{rem}\label{rem:G-K}
The previous corollary improves  \cite[Thm.~2.6.(c)]{GK:91} where, by means of a resolvent estimate, a similar result in the context of abstract Hölder spaces is proved. In contrast to our approach, the authors don't obtain information on the angle of analyticity. Moreover, their approach is not applicable to problems
like Example~\ref{sec:rde} where only part of the system is governed by an analytic semigroup, but the semigroup associated to the whole system is not analytic, cf. Remark~\ref{rem:an/non-an}.
\end{rem}

\subsection{A Degenerate Second-Order Differential Operator on $\mathbf{C[0,1]}$ with Generalized Wentzell Boundary Conditions}

As a first concrete application of our approach we prove the following generation result.

\begin{thm}\label{thm:expl-Wentzell}
Let $a\in\Cne$ such that $\frac1a\in\Lene$ and $[0,1]\ni s\mapsto\int_0^s\frac1{a(r)}\dr$ is Hölder-continuous of exponent $\delta\in(0,1]$.
Moreover, define $A_m$ on $\Cne$ by
\begin{alignat}{2} 
&A_m f:=a\cdot f'',\quad & &D(A_m):=\bigl\{f\in\Cne\cap\rC^2(0,1):
a\cdot
f''\in\Cne\bigr\}.\\
\intertext{Then $D(A_m)\subset\Cene$ and for all $b,c\in\Cne$ and
$\Phi\in\sL(\Cene,\CC^2)$ the operator}
\label{eq:se.sdd.2} 
&Gf:=A_m f+bf'+cf,\quad& &D(G):=\bigl\{f\in
D(A_m):\tbinom{(A_m f)(0)}{(A_m f)(1)}=\Phi f\bigr\}
\end{alignat}
generates a compact, analytic semigroup of angle $\frac{\pi}{2}$
on $\Cne$.
\end{thm}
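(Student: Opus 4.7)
The plan is to cast $G$ into the generic framework of Subsection~\ref{subsec:GE} and apply Corollary~\ref{cor:gen-expl-an}. Take $X:=\Cne$, $\dX:=\CC^2$, $Z:=\Cene$, $Lf:=\tbinom{(A_m f)(0)}{(A_m f)(1)}$, $Pf:=bf'+cf\in\sL(Z,X)$, and keep $\Phi\in\sL(Z,\CC^2)$ as given; with $A:=A_m|_{\ker L}$ the operator $G$ of \eqref{eq:se.sdd.2} coincides with the operator in \eqref{eq:G-one-dim}. A preliminary calculation establishes the embedding $D(A_m)\subset\Cene$: for $f\in D(A_m)$ one has $f''=(af'')/a\in\Lene$ since $af''\in\Cne$ and $\tfrac1a\in\Lene$, hence $f'$ is absolutely continuous, and the Hölder hypothesis on $\phi(s):=\int_0^s\tfrac1{a(r)}\dr$ upgrades this to $f'\in\rC^{0,\delta}[0,1]$. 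Consequently $D(A)\inc Z\inc X$ with compact inclusion $Z\incc X$ by Arzelà--Ascoli.

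The main technical step is to show that $A$ itself generates a \emph{compact analytic semigroup of angle $\frac{\pi}{2}$} on $\Cne$. This requires sectorial resolvent estimates for the degenerate boundary-value problem $\lambda f-af''=g$ with Wentzell-type condition $(af'')(0)=(af'')(1)=0$; the Hölder regularity of $\phi$ is the decisive input, for instance it allows a reduction via $\tau=\phi(s)$ to a non-degenerate second-order operator on $\rC[0,\phi(1)]$ for which $\pi/2$-analyticity on continuous functions is classical. Compactness of $R(\lambda,A)$ is immediate from $D(A)\inc\Cene\incc\Cne$.

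With $A$ in hand, I would verify conditions (i)--(iii) of Corollary~\ref{cor:gen-expl-an} for some $\lambda>\gb(A)$. The range of the Dirichlet operator $L_\lambda$ equals the two-dimensional $\ker(\lambda-A_m)$, whose elements lie in $\rC^{0,\delta}[0,1]$, and the Hölder-Favard identification for an analytic semigroup on $\Cne$ yields $\rg(L_\lambda)\subseteq\Fav^A_{1-\beta}$ for some $\beta\in[0,1)$. The standard identification of fractional-power domains with Hölder-type spaces for a second-order operator then gives $[D((\lambda-A)^\gamma)]\inc\Cene$ for all $\gamma$ close enough to $1$. Balancing $\beta$ and $\gamma$ so that $\beta+\gamma<1$ uses the Hölder exponent $\delta$; once this is done, Corollary~\ref{cor:gen-expl-an} produces $G$ as a generator of an analytic semigroup of angle $\frac{\pi}{2}$ on $\Cne$, and compactness is inherited via $D(G)\inc\Cene\incc\Cne$. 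The principal obstacle is the analyticity of $A$ in the previous paragraph: the hypotheses on $a$ are weak (only $\tfrac1a\in\Lene$ and $\delta$-Hölder continuity of $\phi$), so standard non-degenerate theory cannot be invoked directly, and one must either reduce to the non-degenerate case by a careful change of variables or perform a direct resolvent construction adapted to the possible vanishing of $a$.
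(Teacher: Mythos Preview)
Your overall plan (cast $G$ into the generic framework and invoke Corollary~\ref{cor:gen-expl-an}) is exactly right, and so is the compactness argument at the end. However, you misidentify where the difficulty lies and your proposed verification of conditions (i)--(ii) has a genuine gap.

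First, the analyticity of $A$ is \emph{not} the principal obstacle: under precisely these hypotheses on $a$ it is a known result of Campiti--Metafune \cite[Thm.~4.2]{CM:98} that $A$ generates an analytic semigroup of angle $\frac\pi2$ on $\Cne$. You may simply cite this.

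The real issue is your treatment of (i) and (ii). You invoke a ``H\"older--Favard identification for an analytic semigroup on $\Cne$'' and a ``standard identification of fractional-power domains with H\"older-type spaces''. Such identifications are operator-dependent; for a \emph{degenerate} second-order operator like $A$ no off-the-shelf description of $\Fav^A_\alpha$ or $D((-A)^\gamma)$ as classical H\"older spaces is available. So neither inclusion is justified as stated. Worse, even if one could extract \emph{some} $\beta\in[0,1)$ from the fact that $\ker(\lambda-A_m)\subset\rC^{1,\delta}$, the embedding $[D((\lambda-A)^\gamma)]\inc\Cene$ will in fact only be obtained for $\gamma>\frac1{1+\delta}$ (see below), which for small $\delta$ forces $\gamma$ close to $1$; an unspecified $\beta>0$ would then violate $\beta+\gamma<1$.

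The paper avoids both problems by exploiting the special \emph{Wentzell} structure of $L=\binom{\delta_0 A_m}{\delta_1 A_m}$. Introducing the Dirichlet realization $A_0:=A_m|_{\ker\tilde L}$ with $\tilde L=\binom{\delta_0}{\delta_1}$ and its Dirichlet operator $\tilde L_0$, one checks that
\[
L_\lambda=-\tfrac1\lambda\,A_0R(\lambda,A_0)\tilde L_0,
\]
which immediately gives $\|L_\lambda\|\le\frac C\lambda$ and hence, via Lemma~\ref{lem:char-1-admiss}, $\rg(L_\lambda)\subseteq\Fav_1^A$, i.e.\ $\beta=0$. For (ii) the paper does not identify $D((-A)^\gamma)$ at all; instead, Taylor's formula together with the H\"older hypothesis on $s\mapsto\int_0^s\frac{dr}{a(r)}$ yields the interpolation-type estimate
\[
\|f'\|_\infty\le M\bigl(\rho^\alpha\|f\|_\infty+\rho^{\alpha-1}\|Af\|_\infty\bigr),\qquad\alpha=\tfrac1{1+\delta},
\]
for all $f\in D(A)$ and $\rho>1$, and Lemma~\ref{lem:RR-11.39} then gives $[D((\lambda-A)^\gamma)]\inc\Cene$ for every $\gamma\in(\alpha,1)$. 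Since $\beta=0$, condition (iii) is automatic. The H\"older exponent $\delta$ thus enters only in step (ii), not in step (i) as you suggest.
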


\begin{proof}
Since $\frac{1}{a}\in\Lene$, it follows  for $f\in D(\Am)$ that $f''=\frac{1}{a}\cdot\Am f\in\Lene$. Hence,
$f\in\Wzene\subset\Cene$ and therefore $D(\Am)\subset\Cene$.
Next we define the operator $A\subset\Am$ with domain
\[
D(A):=\Bigl\{f\in D(A_m):\tbinom{(A_m f)(0)}{(A_m f)(1)}=0\Bigr\}.
\]
Moreover, we choose $X:=\Cne$, $\dX:=\CC^2$, $Z:=\Cene$,
$L:=\binom{\delta_0\Am}{\delta_1\Am}:D(\Am)\to\dX$ and $P:=b(\p)\dds+c(\p)$.
Then the operators defined in \eqref{eq:G-one-dim} and \eqref{eq:se.sdd.2} coincide.

\smallbreak
We proceed by verifying the assumptions of Corollary~\ref{cor:gen-expl-an}. Firstly, by
\cite[Thm.4.2]{CM:98} the operator $A$ generates an analytic semigroup of angle $\frac\pi2$ on $X$. Hence, Assumption~\ref{assump:Dir-op-ex}.(a) is satisfied.

\smallbreak
(i) As shown in the proof of \cite[Cor.~4.1, part~(ii)]{EF:05} the operator $A_0\subset\Am$ with domain
\[
D(A_0):=\Bigl\{f\in D(\Am):\tbinom{f(0)}{f(1)}=0\Bigr\}
\]
is dissipative and invertible.  Moreover, $[0,+\infty)\subset\rho(A_0)$ and $\|A_0R(\lambda,A_0)\|\le2$ for all $\lambda\ge0$.
Next, let $\eps_0(s):=1-s$ and $\eps_1(s):=s$ for $s\in[0,1]$. Then $\tilde L_0\in\sL(\dX,X)$ where $\tilde L_0\binom{x_0}{x_1}:=x_0\cdot\eps_0+x_1\cdot\eps_1\in\ker(\Am)$%
\footnote{$\tilde L_0$ is just the abstract Dirichlet operator for $\Am$ and the (Dirichlet-type) boundary operator $\tilde L:=\binom{\delta_0}{\delta_1}$.}%
. For $\lambda>0$ define
\[
L_\lambda:=-\tfrac1\lambda\cdot A_0R(\lambda,A_0)\tilde L_0\in\sL(\dX,X).
\]
Then essentially the same computations as in the proof of Proposition~\ref{prop:L_lambda-rhoA} show that
$L_\lambda$ is indeed the abstract Dirichlet operator for $\Am$ and the (Wentzell-type) boundary operator $L=\binom{\delta_0\Am}{\delta_1\Am}$. Therefore, Assumption~\ref{assump:Dir-op-ex}.(b) is satisfied. Moreover,
$
\|L_\lambda\|\le\frac2\lambda\cdot\|L_0\|
$
for all $\lambda>0$ and hence $\rg(L_\lambda)\subseteq\Fav_1^A$ by Lemma~\ref{lem:char-1-admiss}.(f). This shows (i) for $\beta=0$.

\smallbreak
(ii) As in the proof of \cite[Cor.~4.1, part~(iii)]{EF:05} Taylor's formula implies
for $f\in D(A)$, $s\in[0,1]$ and $0\ne\eps\in(-1,1)$ such that $s+\eps\in[0,1]$ the estimate
\begin{align*}
\bigl|f'(s)\bigr|
&\le\frac{2}{|\eps|}\cdot\l\|f\r\|_\infty
   +\biggl|\,\int_s^{s+\eps}\frac{dr}{a(r)}\,\biggr|\cdot\|A f\|_\infty\\
&\le\frac{2}{|\eps|}\cdot\l\|f\r\|_\infty
   +M\cdot|\eps|^\delta\cdot\|A f\|_\infty
\end{align*}
for some $M\ge0$, where in the second inequality we used the assumption on the Hölder continuity of the anti-derivative of $\frac1a$. Choosing $\rho:=|\eps|^{-(1+\delta)}>1$ and $\alpha:=\frac1{1+\delta}\in[\frac12,1)$ we obtain for $f\in D(A)$
\[
\bigl\|\dds f\bigr\|_\infty
\le(M+2)\cdot\bigl(\rho^\alpha\|f\|_\infty+\rho^{\alpha-1}\|Af\|_\infty\bigr)
\]
for  all $\rho>1$. Since $(\frac d{ds},\Cene)$ is closed, Lemma~\ref{lem:RR-11.39} implies (ii) for all $\gamma\in(\alpha,1)\ne\emptyset$.

\smallbreak
(iii) follows since $\beta=0$ and $\gamma<1$.

\smallbreak
Summing up, we verified all assumptions of Corollary~\ref{cor:gen-expl-an} and hence $G$ generates an analytic semigroup of angle $\frac\pi2$ on $X$.
Finally, since at the beginning of the proof we showed that
$D(\Am)\subset\Cene$, we conclude by the closed graph and the
Arzel\`{a}--Ascoli theorems that
\[
X_1^G\inc\Cene\incc\Cne,
\]
where 
``$\incc$" denotes a compact injection.
Hence, $X_1^G\incc X$ and \cite[Prop.II.4.25]{EN:00} implies
that $G$ has compact resolvent. By \cite[Thm.~II.4.29]{EN:00} it follows that the semigroup generated by $G$ is compact. This completes the proof.
\end{proof}

\begin{cor}
Under the assumptions on $a,b,c\in\Cne$ made in Theorem~\ref{thm:expl-Wentzell}, the degenerate differential equation with generalized Wentzell boundary conditions given by
\[\left\{\tag{DE}
\begin{aligned}
		&\frac{du}{dt} (t,s) = a(s)\,\frac{d^2 u}{ds^2} (t,s) + b(s)\,\frac{d u}{ds}(t,s)+c(s)\, u(t,s), && 0< s<1, \;t\geq 0, \\
		&\frac{a\,d^2u}{ds^2} (t,j) = \phi_ju(t), &&j=0,1,\, t\geq 0, \\
		&u(0,s) = f_0(s), && 0\le s\le1
\end{aligned}
\right.
\]
is well posed on $\Cne$ for all functionals $\phi_0,\phi_1\in{(\Cene)}'$.
\end{cor}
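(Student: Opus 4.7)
The plan is to reduce this corollary directly to Theorem~\ref{thm:expl-Wentzell}. First I would rewrite (DE) as an abstract Cauchy problem (ACP) on $X=\Cne$ by setting $x(t):=u(t,\p)\in X$ and defining
\[
\Phi f:=\tbinom{\phi_0(f)}{\phi_1(f)}\in\CC^2,\quad f\in\Cene.
\]
Since $\phi_0,\phi_1\in(\Cene)'$, the operator $\Phi$ is by definition bounded from $\Cene$ into $\CC^2$, i.e., $\Phi\in\sL(\Cene,\CC^2)$. With this choice the operator $G$ defined in \eqref{eq:se.sdd.2} coincides exactly with the generator appearing in the boundary value problem (DE): the differential action on the interior is $a\cdot f''+bf'+cf$, and the generalized Wentzell boundary conditions $(a\,u_{ss})(t,j)=\phi_j u(t)$ for $j=0,1$ are encoded precisely by $\binom{(A_mf)(0)}{(A_mf)(1)}=\Phi f$.

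Next I would invoke Theorem~\ref{thm:expl-Wentzell}: the hypotheses on $a$, $b$, $c$ are by assumption the same as in that theorem, and we have just verified the remaining hypothesis $\Phi\in\sL(\Cene,\CC^2)$. The theorem therefore yields that $G$ generates a (compact, analytic) $C_0$-semigroup $\TPt$ on $\Cne$.

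Finally, since $G$ is a generator, the abstract Cauchy problem associated with $G$ is well-posed in the sense of \cite[Sect.~II.6]{EN:00}, and the classical solution of (DE) with initial datum $f_0\in D(G)$ is given by $u(t,\p):=\TP(t)f_0$. This establishes the well-posedness claim and completes the proof. There is no substantial obstacle here; the only point requiring any thought is the identification of the boundary term $\Phi$ as a bounded operator on $\Cene$, which is immediate from the choice of dual space.
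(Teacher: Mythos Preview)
Your proposal is correct and is exactly the intended argument: the paper gives no separate proof of this corollary because it is an immediate consequence of Theorem~\ref{thm:expl-Wentzell} once one identifies (DE) with the abstract Cauchy problem for $G$ and observes that $\Phi=\binom{\phi_0}{\phi_1}\in\sL(\Cene,\CC^2)$.
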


\begin{rem}\label{rem:W-BC}
Note that every function $a\in\Cne$ of the form
\[
a(s)=m(s)\cdot s^{\alpha_0}(1-s)^{\alpha_1},\  s\in[0,1]
\]
for $\alpha_{0,1}\in[0,1)$ and a strictly positive $m\in\Cne$ satisfies the assumption of Theorem~\ref{thm:expl-Wentzell}. Hence, this result generalizes \cite[Cor.~4.1]{EF:05} and \cite[Thm.~3]{FGGR:02} where such $a$ and less general boundary operators $\Phi$ were considered, respectively.
\end{rem}

\subsection{A Reaction-Diffusion Equation on $\mathbf{L^p[0,\pi]}$  with nonlocal Neumann Boundary Conditions}\label{sec:rde}

For $1\le p<+\infty$ define $X^p:=\rL^p[0,\pi]$. Then it is well-known (or use \cite[Thm.~2.2.(b)]{CKW:08} and \cite[Thm.~3.14.17]{ABHN:01}) that the operator $A\subset\frac{d^2}{ds^2}$ with domain
\[
D(A):=\bigl\{f\in\rW^{2,p}[0,\pi]:f'(0)=0=f(\pi)\bigr\}
\]
generates a bounded analytic semigroup on $X^p$. For $\gamma\in(0,1)$ we take the space
\[
Z_\gamma:=\bigl[D\bigl((-A)^{\gamma}\bigr)\bigr].
\]
Moreover, let $Y^p:= \rL^ p([-\pi,0],X^p)$ which by \cite[Thm.~A.6]{BP:05} is isometrically isomorphic to $L^p([-\pi,0]\times[0,\pi])$. For this reason in the sequel we will use  the notation $v(r,s):=(v(r))(s)$ for $v\in Y^p$ and $r\in[-\pi,0]$, $s\in[0,\pi]$.
Then the following holds.

\begin{thm}\label{thm:expl.2}
Let $p\in[1,+\infty)$ and $\gamma\in(0,\frac1p)$. Then for every $P\in\sL(Z_\gamma,X^p)$ and all functions
$\mu: [-\pi,0] \rightarrow \mathbb{R}$ of bounded variation the operator
\begin{align*}
	\sG &:= \begin{pmatrix} \frac{\partial^2}{\partial s^2} + P & 0 \\ 0 & \frac{\partial}{\partial r} \end{pmatrix}, \\
	D(\sG) &:= \Bigl\{ \tbinom fv \in \rW^ {2,p}[0,\pi] \times \rW^ {1,p}\bigl([-\pi,0],X^p\bigr): v(0) = f, \,  f(\pi) = 0,\\ &\hspace{7.2cm} f'(0) = \int_0^{\pi} \int_{-\pi}^0 v(r,s) \dmr \ds \Bigr\}
\end{align*}
generates a $C_0$-semigroup on $\sX^p:=X^p\times Y^p=\rL^p[0,\pi]\times\rL^ p([-\pi,0],X^p)$.
\end{thm}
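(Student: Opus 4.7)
The plan is to fit $\sG$ into the generic example of Subsection~\ref{subsec:GE} and then invoke Corollary~\ref{cor:SW-gen}. On $\sX^p = X^p \times Y^p$ take the maximal operator $\sA_m := \mathrm{diag}\bigl(\tfrac{d^2}{ds^2}, \tfrac{d}{dr}\bigr)$ with
\[
D(\sA_m) := \bigl\{(f,v) : f \in \rW^{2,p}[0,\pi],\ f(\pi)=0,\ v \in \rW^{1,p}([-\pi,0], X^p)\bigr\},
\]
boundary space $\dsX := X^p \times \CC$, and boundary operator $L\binom{f}{v} := \binom{v(0)}{f'(0)}$. The restriction $\sA := \sA_m|_{\ker L}$ is then the diagonal operator $\mathrm{diag}(A, A_0)$, where $A$ is the analytic generator introduced at the start of this subsection and $A_0 := \tfrac{d}{dr}$ on $\{v \in Y^p : v(0) = 0\}$ generates the nilpotent left-shift semigroup on $Y^p$; thus $\sA$ generates a $C_0$-semigroup on $\sX^p$ and Assumption~\ref{assump:Dir-op-ex} is satisfied. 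With the intermediate space $Z := Z_\gamma \times \rC([-\pi,0], X^p)$, set
\[
P' := \mathrm{diag}(P, 0) \in \sL(Z, \sX^p), \qquad \Phi\binom{f}{v} := \binom{f}{\int_0^\pi \int_{-\pi}^0 v(r,s) \dmr \ds} \in \sL(Z, \dsX);
\]
boundedness of the second component of $\Phi$ uses the BV-hypothesis on $\mu$ together with the embedding $\rW^{1,p}([-\pi,0], X^p) \inc \rC([-\pi,0], X^p)$. By Lemma~\ref{lem:G-as-SWp} one has $\sG = (\sA_{-1} + BC)|_{\sX^p}$ with $B := (Id_{\sX^p}, L_\sA)$ and $C := \binom{P'}{\Phi}$, so it remains to verify the admissibility hypotheses of Corollary~\ref{cor:SW-gen}.

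Since $\sA$ is block-diagonal, the associated semigroup and every input-output map splits into contributions governed separately by $A$ or by $A_0$. For the $A$-governed entries invoke Theorem~\ref{lem:A^alpha-admiss-pair} with $\beta = 0$ (the Dirichlet operator of $A$ maps $\CC$ into $\Fav^A_1$) and the chosen $\gamma < \tfrac{1}{p}$: this delivers joint $p$-admissibility of the relevant pairs together with the small-time bound $\|\Ft\|_p \le M t^\eps$ from part~(e). For the $A_0$-governed entries, the nilpotency of the shift and the BV-hypothesis on $\mu$ make the admissibility of the delay observation a standard calculation, and the explicit shift-convolution structure yields an analogous small-time estimate on the corresponding block.

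The main obstacle is the invertibility condition $1 \in \rho(\Ft^{(\sA, B, C)})$. Combining the four block estimates will give $\|\Ft^{(\sA,B,C)}\|_p \to 0$ as $t \to 0^+$, so $\|\Ft\|_p < 1$ for some $t > 0$ and hence $1 \in \rho(\Ft^{(\sA, B, C)})$. Corollary~\ref{cor:SW-gen} then produces the required $C_0$-semigroup generation on $\sX^p$. Note that, consistent with Remark~\ref{rem:an/non-an}, the resulting semigroup is not analytic because of the $A_0$-block, which is why Corollary~\ref{cor:gen-expl-an} is not directly applicable here.
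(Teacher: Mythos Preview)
Your overall framework (fitting $\sG$ into the generic example of Subsection~\ref{subsec:GE} and invoking Corollary~\ref{cor:SW-gen}) matches the paper's, but there are two genuine gaps.

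First, the claim that the Dirichlet operator of $A$ maps $\CC$ into $\Fav^A_1$ (i.e.\ $\beta=0$) is false for $p>1$. Explicitly $(L_\lambda c)(s) = c\cdot\frac{\sinh(\sqrt\lambda(s-\pi))}{\sqrt\lambda\,\cosh(\pi\sqrt\lambda)}$, and an $\rL^p$-computation shows that $\sup_{\lambda>0}\|\lambda^{\alpha}L_\lambda\|<\infty$ only for $\alpha\le\frac{p+1}{2p}$. The correct value is therefore $\beta=\frac{p-1}{2p}$; with this one still has $\beta+\gamma<\frac{p+1}{2p}\le1$, and Theorem~\ref{lem:A^alpha-admiss-pair} yields joint $q$-admissibility for all $q\in\bigl(\frac{2p}{p+1},\frac1\gamma\bigr)\ni p$. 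So this part is repairable, but not with $\beta=0$.

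Second, and more seriously, the assertion that the shift-governed input-output block enjoys ``an analogous small-time estimate'' is false. Write $\phi$ for the delay functional $v\mapsto\int_0^\pi\int_{-\pi}^0 v(r,s)\dmr\ds$ and $K$ for the control operator associated with the boundary map $v\mapsto v(0)$. Using $\int_0^t S_{-1}(t-s)K u(s)\ds=u(\max\{0,\p+t\})$ and taking for instance $\mu=\delta_0$, one gets $\bigl(\Ft^{(A_0,K,\phi)}u\bigr)(t)=\int_0^\pi u(t,s)\ds$, an operator of norm $\pi^{1-1/p}$ for \emph{every} $t>0$. Hence $\|\Ft^{(\sA,B,C)}\|$ does not tend to $0$ in general, and your Neumann-series route to $1\in\rho(\Ft)$ fails. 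The paper instead observes that $\Ft^{(A_0,K,\phi)}$ occupies only an off-diagonal position in the (reduced) $3\times3$ input-output matrix and applies a Schur-complement reduction: invertibility of $\Id-\Ft$ is equivalent to invertibility of an expression in which $\Ft^{(A_0,K,\phi)}$ appears only multiplied by $A$-governed input-output maps, and those factors do tend to $0$ by Theorem~\ref{lem:A^alpha-admiss-pair}(e).
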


\begin{rem}\label{rem:an/non-an}
Note that the semigroup group generated by $\sG$ will \emph{not} be analytic. Nonetheless, Theorem~\ref{lem:A^alpha-admiss-pair} for analytic semigroups will be very helpful to deal with the analytic part in the first component. The whole matrix will then be treated by Corollary~\ref{cor:SW-gen}. 
\end{rem}

\begin{proof}
We first show how the operator $\sG$ fits into the abstract framework from Section~\ref{subsec:GE}. To this end we introduce the following operators and spaces, where, for simplicity, in the sequel we put $X:=X^p$, $Y:=Y^p$ and $\sX:=\sX^p$. Consider
\begin{itemize}
\item $A_m:= \frac{d^2}{ds^2}$ with domain $D(A_m) =\{f\in \rW^ {2,p}[0,\pi]:f(\pi)=0\}$ on $X$,
\item $L:=\delta'_0:D(A_m)\to\partial X:=\CC$, i.e., $Lf=f'(0)$,
\item $D_m:= \frac{d}{dr}$ with domain $D(D_m) =\rW^ {1,p}([-\pi,0],X)$ on $Y=\rL^  p([-\pi,0],X)$,
\item $K:=\delta_0:D(D_m)\to\dY:=X=\rL^p[0,\pi]$, i.e., $Kv=v(0)$,
\item $A=A_m|_{\ker L}$, $D:=D_m|_{\ker K}$.
\end{itemize}
Then, as mentioned above, $A$ is the generator of an analytic semigroup $\Tt$ on $X$ while $D$ generates the nilpotent left-shift semigroup $\St$ on $Y$. Moreover, the associated Dirichlet operators exist for $\mu=0$ and are given by
\begin{itemize}
\item $L_0\in\sL(\dX,X)=\sL(\CC,\rL^{p}[0,\pi])$, $(L_0 x)(s)=x\cdot(s-\pi)$ for $s\in[0,\pi]$,
\item $K_0\in\sL(\dY,Y)=\sL(\rL^{p}[0,\pi],\rL^  p([-\pi,0],\rL^{p}[0,\pi]))$, $(K_0 f)(r):=f$ for $r\in[-\pi,0]$.
\end{itemize}
This shows that Assumption~\ref{assump:Dir-op-ex} is satisfied.
Next, we define the spaces $\dsX:=\dX\times\dY=\CC\times\rL^{p}[0,\pi] $ and
$\sZ:=Z_\gamma\times[D(D_m)]=[D((-A)^\gamma)] \times \rW^ {1,p}([-\pi,0],X)$ and introduce the operator matrices
\begin{align*}
	&\sA := \begin{pmatrix} A & 0 \\ 0 & D \end{pmatrix}:D(\sA) := D(A) \times D(D)\subset\sX\to\sX, &&
	\sL_\sA := \begin{pmatrix} \LA & 0 \\ 0 & \KD \end{pmatrix} :\sX\rightarrow \sXmesA, \\
	&\sP:= \begin{pmatrix} P & 0 \\ 0 & 0\end{pmatrix}: \sZ \rightarrow \sX&&
	\Phi:= \begin{pmatrix}0 & \phi \\ \Id & 0 \end{pmatrix}: \sZ \rightarrow \dsX,
\end{align*}
where $\KD:=-D_{-1}K_0$ and $\phi(v ):= \int_0^{\pi} \int_{-\pi}^0 v(r,s) \dmr \ds$.
Then as in Lemma~\ref{lem:G-as-SWp} we can write $(\sG,D(\sG))$ as a perturbation of the form
\begin{align*}
	\sG = (\sA_{-1} +\sP+\sL_\sA\cdot\Phi)|_\sX.
\end{align*}
We proceed by verifying the conditions of Corollary~\ref{cor:SW-gen}.
Since $\sA$ is diagonal with diagonal domain, we can split the problem into three parts: We show that
\begin{enumerate}[(i)]
\item $(\LA, P)$ is jointly $p$-admissible for $A$,
\item $(\KD, \phi)$ is jointly $p$-admissible for $D$,
\item $1\in\rho(\Ft)$ for some $t>0$ where
\begin{equation}\label{eq:F_t-simple}
  \Ft = 	\begin{pmatrix}
  				\Ft^{(A,\Id,P)} & \Ft^{(A,\LA,P)} & 0 \\
      			0 & 0 & \Ft^{(D,\KD,\phi)} \\
     			\Ft^{(A, \Id, \Id)} & \Ft^{(A,\LA, \Id)} & 0
    		\end{pmatrix}.
\end{equation}
\end{enumerate}

As already mentioned, due to the non-analytic part stemming from the left-shift-semigroup $\St$ generated by $D$ on $Y$,  the operator matrix $\sG$ will not generate an analytic semigroup on $\sX$. Still, we use of our perturbation Theorem~\ref{lem:A^alpha-admiss-pair} to treat the analytic part~(i) and also to prove (iii).

\smallbreak
(i) First we use Lemma~\ref{lem:char-1-admiss} to show that $\rg(L_0) \subseteq \mathrm{Fav}_{\frac{p+1}{2p}}^A$. To this end note that for $c\in\dX=\CC$ we have for $\lambda>0$
\[
\l(L_{\lambda} c\r)(s) = c\cdot\frac{\sinh\bigl(\sqrt{\lambda}(s-\pi)\bigr)}{\sqrt{\lambda}\cdot \cosh(\pi\sqrt{\lambda})},\quad s\in[0,\pi].
\]
Using this representation and the estimate $\sinh(s)\le\frac{e^s}2$ for $s\ge0$ we obtain
\begin{align*}
\sup_{\lambda > 0} \bigl\| \lambda^{\frac{p+1}{2p}} L_{\lambda}\bigr\|
&=\sup_{\lambda>0}\lambda^{\frac{1}{2p}}\cdot\Biggl(\int_0^\pi
\biggl(\frac{\sinh\bigl(\sqrt{\lambda}(s-\pi)\bigr)}{\cosh(\pi\sqrt{\lambda})}\biggr)^p\ds\Biggr)^{\frac1p}\\
&\leq \sup_{\lambda > 0} \frac{e^{\pi\sqrt{\lambda}}}{2p^{\frac{1}{p}}\cdot \cosh(\pi\sqrt{\lambda})} \le 1.
\end{align*}
This implies assumption~(i) of Theorem~\ref{lem:A^alpha-admiss-pair} for $\beta =1-\frac{p+1}{2p}= \frac{p-1}{2p}$.
Moreover, since $\gamma<\frac1p$,
\[
\beta+\gamma<\tfrac{p-1}{2p}+\tfrac1p=\tfrac{p+1}{2p}\le1,
\] 
hence Theorem~\ref{lem:A^alpha-admiss-pair} implies that $(\LA, P)$ is jointly $q$-admissible for $A$  for all $q\in(\frac{2p}{p+1},\frac1\gamma)$. If $p=1$, then $\beta=0$, hence again by Theorem~\ref{lem:A^alpha-admiss-pair} we obtain the same conclusion for $q=1$. For $p>1$ we have $p\in(\frac{2p}{p+1},\frac1\gamma)$. This proves (i).
 
\smallbreak
(ii) We first show that the functional $\phi$ is a $p$-admissible observation operator for $D$. In fact, for $v\in D(D)= \rW_0^{1,p}([-\pi,0],X)$ we have
\begin{align}
\int_0^{\pi} \bigl|\phi\, S(t)v\bigr|^p\dt
&=\int_0^{\pi} \left| \int_0^{\pi} \int_{-\pi}^{-t}  v(r+t,s)\dmr\ds \right|^p dt \nonumber \\
&\le \int_0^{\pi} \biggl( \int_0^{\pi} \int_{-\pi}^{-t} \bigl|v(t+r,s)\bigr| \dnmr\ds \biggr)^p\dt \nonumber \\
&\leq \int_0^{\pi} \bigl(\pi\cdot|\mu|[-\pi,-t]\bigr)^{p-1}  \int_0^{\pi} \int_{-\pi}^{-t}  \bigl| v(t+r,s)\bigr|^p \dnmr\ds \dt \label{Ho} \\
&\leq  \bigl(\pi\cdot|\mu|[-\pi,0]\bigr)^{p-1}\cdot\int_0^{\pi}  \int_{-\pi}^{-t} \int_{0}^{\pi} \bigl| v(t+r,s)\bigr|^p\ds  \dnmr\dt \label{Fubini1} \\
&= \bigl(\pi\cdot|\mu|[-\pi,0]\bigr)^{p-1}\cdot \int_0^{\pi} \int_{-\pi}^{-t}  \left\| v(t+r)\right\|_{X}^p  \dnmr\dt \nonumber \\
&=\bigl(\pi\cdot|\mu|[-\pi,0]\bigr)^{p-1}\cdot \int_{-\pi}^{0} \int_{0}^{-r}  \left\| v(t)\right\|_{X}^p\dt \dnmr \label{Fubini2} \\
&\leq \pi^{p-1}\cdot\bigl(|\mu|[-\pi,0]\bigr)^{p}\cdot\left\|v\right\|_{p}^p, \nonumber
\end{align}
where in \eqref{Ho} we used Hölder's inequality twice and the Fubini--Tonelli theorem in \eqref{Fubini1}, \eqref{Fubini2}.

\smallbreak
Next, as in the proof of \cite[Cor.~25]{ABE:13} we obtain for $u \in W_{0}^{1,p}([0,\pi],X)$ and $t\in(0,\pi]$ that
\begin{equation}\label{eq:CoMap-Ex-HMR}
\int_0^{t} S_{-1}(t-r)\KD u(r)\dr = u\bigl(\max\{0, \p+t\}\bigr).
\end{equation}
Hence, the operator $\KD$ is a $p$-admissible control operator for $D$.

\smallbreak
Now we show that the pair $(\KD, \phi)$ is $p$-admissible. In fact, using \eqref{eq:CoMap-Ex-HMR} we obtain for $u\in\rW^{1,p}([0,\pi],X)$ by essentially the same computations as above 
\begin{align*}
\int_0^{\pi} \biggl|\phi\int_0^{t} S_{-1}(t-r)\KD u(r)\dr\biggr|^p dt
&=\int_0^{\pi} \left| \int_0^{\pi} \int_{-\pi}^{0}  u\bigl(\max\{0, r+t\},s\bigr)\dmr\ds \right|^p dt \nonumber \\
&\le \int_0^{\pi} \biggl( \int_0^{\pi} \int_{-t}^0 \bigl|u(t+r,s)\bigr| \dnmr\ds \biggr)^p\dt \nonumber \\
&\le \bigl(\pi\cdot|\mu|[-\pi,0]\bigr)^{p-1}\cdot \int_0^{\pi} \int_{-t}^{0}  \left\| u(t+r)\right\|_{X}^p  \dnmr\dt \nonumber \\
&\leq \pi^{p-1}\cdot\bigl(|\mu|[-\pi,0]\bigr)^{p}\cdot\left\| u\right\|_{p}^p. \nonumber
\end{align*}

\smallbreak
(iii) We first determine the input-output map of the complete system. Note that by \eqref{eq:def-U} we should choose $\sU=\sX\times\dsX=(X\times Y)\times(\dX\times\dY)$. However, since the perturbation $P$ only acts on $X$ we can cancel out the factor $Y$ and choose
\[
\sU:=X\times\dsX=X\times(\dX\times\dY)=\rL^ p[0,\pi]\times\CC\times\rL^ p[0,\pi].
\]
By this reduction and the diagonal structure of the generator $\sA$ we obtain the simplified input-output map%
\footnote{The real input-output map of the whole system is obtained from $\Ft$ by inserting at the second place a row and a column containing only zeros.}
$\Ft\in\sL(\sU)$ given by \eqref{eq:F_t-simple}.
Note that by Theorem~\ref{lem:A^alpha-admiss-pair}.(e) we have
\begin{equation}\label{eq:est-sFt-A}
\bigl\|\Ft^{(A,*,\star)}\bigr\|\to0\text{ as }t\to0^+,
\end{equation}
where ``$*,\star$'' indicates one of the pairs ``$\Id,P\,$'', ``$\LA,P\,$'', ``$\Id,\Id\,$'' or ``$\LA,\Id\,$''.
In particular, $\Id - \Ft^{(A,\Id,P)}$ is invertible for $t>0$ sufficiently small. Using Schur complements (cf. \cite[Lem.~2.1]{Nag:89B}) the invertibility of $\Id - \Ft$ is therefore equivalent to the invertibility of
\begin{align*}
	&\Id -
		\begin{pmatrix} \Ft^{(A, \Id, \Id)} & \Ft^{(A,\LA, \Id)} \end{pmatrix}
		\cdot\begin{pmatrix} \bigl(\Id - \Ft^{(A,\Id,P)}\bigr)^{-1} & \Ft^{(A,\LA,P)} \\
      			0 & \Id \end{pmatrix}
      	\cdot\begin{pmatrix} 0 \\ \Ft^{(D,\KD,\phi)} \end{pmatrix} \\
      &\qquad= \Id - \Ft^{(A, \Id, \Id)}\cdot \Ft^{(A,\LA, P)}\cdot \Ft^{(D,\KD,\phi)} - \Ft^{(A,\LA, \Id)}\cdot \Ft^{(D,\KD,\phi)}.
\end{align*}
Since by \eqref{eq:est-sFt-A}
\begin{align*}
	\left\| \Ft^{(A, \Id, \Id)}\cdot \Ft^{(A,\LA, P)}\cdot \Ft^{(D,\KD,\phi)} + \Ft^{(A,\LA, \Id)}\cdot \Ft^{(D,\KD,\phi)} \right\|
		\to0\text{ as }t\to0^+,
\end{align*}
the assertion holds for $t>0$ sufficiently small. 

\smallbreak
Summing up (i)--(iii), by Corollary~\ref{cor:SW-gen} the matrix $\sG$ generates a $C_0$-semigroup on $\sX$.
\end{proof}

\begin{cor}
The reaction-diffusion equation subject to  Neumann boundary conditions with distributed unbounded delay given by
\[\left\{\tag{RDE}
\begin{aligned}
		&\frac{du}{dt} (t,s) = \frac{d^2 u}{ds^2} (t,s) + b(s)\, \frac{d u}{ds}(t,s)+c(s)\,u(t,s), && 0 < s < \pi, \;t\geq 0, \\
		&\frac{d u}{d s} (t,0) =  \int_0^{\pi}\int_{-\pi}^0 u(t+r,s) \dmr\ds, && t\geq 0, \\
		&u(t,\pi) = 0, && t\geq 0,\\
		&u(r,s) = u_0(r,s), && 0 < s < \pi, \;r\in [-\pi,0],\\
		&u(0,s) = f_0(s), && 0 < s < \pi
\end{aligned}
\right.
\]
is well posed on $\rL^p[0,\pi]$ for all $p\in[1,2)$, $b,c\in\rL^\infty[0,\pi]$ and $\mu$ of bounded variation.
\end{cor}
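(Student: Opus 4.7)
The plan is to recast (RDE) as the abstract Cauchy problem associated with the operator $\sG$ of Theorem~\ref{thm:expl.2}, with $Pf:=b\cdot f'+c\cdot f$ and initial datum $\binom{f_0}{u_0}$. Writing $v(t)(r):=u(t+r,\p)$, the nonlocal boundary relation $\frac{du}{ds}(t,0)=\int_0^\pi\!\!\int_{-\pi}^0 u(t+r,s)\dmr\ds$ becomes exactly the domain condition appearing in $D(\sG)$, while $u(t,\pi)=0$ is already built into $D(A_m)$. Consequently the only hypothesis of Theorem~\ref{thm:expl.2} that is not automatic is the membership $P\in\sL(Z_\gamma,X^p)$ for some $\gamma\in(0,\tfrac1p)$.

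Because $c\in\rL^\infty[0,\pi]$, multiplication by $c$ is bounded on $X^p=\rL^p[0,\pi]$, and because $b\in\rL^\infty[0,\pi]$ as well, the first-order term $f\mapsto b\cdot f'$ is bounded from $\rW^{1,p}[0,\pi]$ to $X^p$. So the whole matter reduces to checking the continuous embedding $Z_\gamma=[D((-A)^\gamma)]\inc\rW^{1,p}[0,\pi]$ for a suitable $\gamma$. To establish this I would proceed as in the proof of Theorem~\ref{thm:expl-Wentzell}(ii). A standard Landau--Kolmogorov type estimate (obtained e.g.\ by integration by parts using the Neumann/Dirichlet conditions $f'(0)=0=f(\pi)$ and Young's inequality, or by interpolating the trivial bounds on $\rL^p$ and $\rW^{2,p}$) yields, for all $f\in D(A)$,
\[
\|f'\|_p\le M\bigl(\rho^{1/2}\|f\|_p+\rho^{-1/2}\|Af\|_p\bigr)\quad\text{for all }\rho>1,
\]
with $M$ independent of $f$ and $\rho$. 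Since $(\tfrac{d}{ds},\rW^{1,p}[0,\pi])$ is closed as an operator on $X^p$, Lemma~\ref{lem:RR-11.39} then delivers $[D((-A)^\gamma)]\inc\rW^{1,p}[0,\pi]$ for every $\gamma\in(\tfrac12,1)$.

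Because $p<2$, we have $\tfrac1p>\tfrac12$, so the interval $(\tfrac12,\tfrac1p)$ is nonempty (equal to $(\tfrac12,1)$ when $p=1$). Fixing any $\gamma$ there gives $P\in\sL(Z_\gamma,X^p)$ with $\gamma<\tfrac1p$, which is precisely the missing assumption. Theorem~\ref{thm:expl.2} therefore applies and yields well-posedness of (RDE) on $\rL^p[0,\pi]$. The main technical obstacle is the interpolation/Landau--Kolmogorov inequality for $\|f'\|_p$ under the mixed Neumann/Dirichlet boundary conditions, but this is a classical estimate and can alternatively be bypassed by identifying $D((-A)^\gamma)$ with a fractional Sobolev space and invoking the embedding $\rW^{2\gamma,p}\inc\rW^{1,p}$ for $2\gamma>1$. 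Everything else — the $p$-admissibility of $(\LA,P)$ and $(\KD,\phi)$ for $\mu$ of bounded variation and the invertibility of $Id-\Ft$ for small $t$ — has already been handled in Theorem~\ref{thm:expl.2}, so no further work is needed.
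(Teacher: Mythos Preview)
Your proposal is correct and follows essentially the same route as the paper: reduce to Theorem~\ref{thm:expl.2} by verifying $P\in\sL(Z_\gamma,X^p)$ for some $\gamma\in(\tfrac12,\tfrac1p)$, which in turn follows from the Landau--Kolmogorov inequality $\|f'\|_p\le\tfrac{9}{\eps}\|f\|_p+\eps\|Af\|_p$ (cited in the paper from \cite[Ex.~III.2.2]{EN:00}) via Lemma~\ref{lem:RR-11.39} with $\alpha=\tfrac12$. The only cosmetic difference is that the paper invokes the specific constant from that reference rather than describing the estimate abstractly.
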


\begin{proof} Let $P:=b(\p)\dds+c(\p)$ with domain $Z:=\rW^ {1,p}[0,\pi]$.
By the previous result it suffices to prove that $Z_\gamma\inc Z$ for  $\gamma\in(\frac{1}{2},\frac1p)\ne\emptyset$. To this end note that by  \cite[Ex.~III.2.2]{EN:00} for each $f\in D(A)$ and $\eps>0$ 
\begin{align*}
	\bigl\| f' \bigr\|_p \leq \tfrac{9}{\eps}\cdot \| f \|_p + \eps\cdot \| A f \|_p.
\end{align*}
Setting $\rho:=\eps^{-2}$ the assertion follows from Lemma~\ref{lem:RR-11.39}.
\end{proof} 

\begin{rem}\label{rem:HMR-ex}
Theorem~\ref{thm:expl.2}
generalizes \cite[Expl.~5.2]{HMR:15} where only the exponent $p=2$ and the perturbation $P=0$ is considered.
\end{rem}

\section{Conclusions}

We presented an approach to the perturbation of generators $A$ of analytic semigroups which, under  conditions on $\rg(B)$ and $D(C)$, 
gives that $(\Ame+BC)|_X$ generates an analytic semigroup as well. Our main results are Theorem~\ref{lem:A^alpha-admiss-pair} for the general case and Corollaries~\ref{cor:SW-gen}, \ref{cor:gen-expl-an} for our Generic Example~\ref{subsec:GE}. In contrast to the known literature on this subject, our results
\begin{itemize}
\item allow perturbations $P=BC$ which are not relatively $A$-bounded as, e.g., in \cite[Sect.~III.2]{EN:00},
\item always give the angle of analyticity of the perturbed semigroup, unlike \cite[Thm.~2.6]{GK:91} in the situation of the generic example,
\item are applicable also to coupled systems which are only in part governed by an analytic semigroup, cf. Remark~\ref{rem:an/non-an}.
\end{itemize}
Moreover, nevertheless being very general, our results applied to concrete situations recover or even improve generation results obtained by methods tailored for the specific case, cf. Remarks~\ref{rem:G-K}, \ref{rem:W-BC} and \ref{rem:HMR-ex}.

\appendix
\section{}\label{app:admiss}

In this appendix we collect some results which are quite helpful to check the hypotheses  of Theorem~\ref{lem:A^alpha-admiss-pair} and Corollaries~\ref{cor:SW-gen}, \ref{cor:gen-expl-an} in applications. First we consider condition~(i) in Corollary~\ref{cor:gen-expl-an}.

\begin{lem}\label{lem:char-1-admiss} In the situation of Subsection~\ref{subsec:GE}, 
for $\alpha\in(0,1]$ the following are equivalent.
\begin{enumerate}
\item There exists $\lambda_0>\gb(A)$ such that $\sup_{\lambda>\lambda_0}\|\lambda^{\alpha}L_\lambda x\|<+\infty$ for all $x\in\dX$.
\item There exist $\lambda_0>\gb(A)$ and $M>0$ such that $\|Lx\|\ge\lambda^{\alpha}M\cdot\|x\|$ for all $\lambda\ge\lambda_0$ and $x\in\ker(\lambda-\Am)$.
\item $\rg(L_\mu)=\ker(\mu-A_m)\subset\Fav_\alpha^A$ for some $\mu\in\rho(A)$.
\end{enumerate}
Moreover, if $\alpha=1$, then (a)--(c) are also equivalent to
\begin{enumerate}
\item[(d)] $\LA$ is a $1$-admissible control operator for $A$.
\item[(e)] For all $x\in D(A_m)$ there exists a sequence $(x_n)_{n\in\NN}\subset D(A)$ such that $\lim_{n\to+\infty}x_n=x$ and $\sup_{n\in\NN}\|x_n\|<+\infty$.
\end{enumerate}
\end{lem}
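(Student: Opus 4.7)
The proof can be organized as the chain (a) $\iff$ (b) $\iff$ (c), valid for every $\alpha\in(0,1]$, followed by the supplementary equivalences (c) $\iff$ (d) $\iff$ (e) specific to $\alpha=1$. All arguments exploit the fact that $L$ restricted to $\ker(\lambda-A_m)$ is a bijection onto $\dX$ with bounded inverse $L_\lambda$, together with the identity $L_\lambda=(\mu-A)\RlA L_\mu$ from Proposition~\ref{prop:L_lambda-rhoA}, from which one reads off the two key formulas
\[
\RlA L_\mu=\frac{L_\mu-L_\lambda}{\lambda-\mu}
\quad\text{and}\quad
A\RlA L_\mu=\frac{\mu L_\mu-\lambda L_\lambda}{\lambda-\mu}.
\]

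The equivalence (a) $\iff$ (b) is an immediate substitution: every $y\in\ker(\lambda-A_m)$ is uniquely of the form $y=L_\lambda x$ with $x=Ly\in\dX$, so that the inequality $\|L_\lambda x\|\le C\lambda^{-\alpha}\|x\|$ in (a) rewrites as $\|Ly\|\ge\lambda^\alpha M\|y\|$ in (b) with $M=1/C$. For (a) $\iff$ (c), I would apply the Favard-space characterization from \cite[Prop.~II.5.12]{EN:00}, namely $y\in\Fav_\alpha^A$ iff $\sup_{\lambda>\lambda_0}\|\lambda^\alpha A\RlA y\|<+\infty$. Substituting $y=L_\mu x$ and using the identity above, the quantity $\lambda^\alpha A\RlA L_\mu x$ splits into $\lambda^\alpha\mu L_\mu x/(\lambda-\mu)$, bounded in $\lambda$ for every $\alpha\in(0,1]$, minus $\lambda^{\alpha+1}L_\lambda x/(\lambda-\mu)$, which is bounded iff $\lambda^\alpha L_\lambda x$ is. Hence (a) is equivalent to $L_\mu x\in\Fav_\alpha^A$ for each $x\in\dX$, that is, (c). A byproduct is that condition (c) is independent of the chosen $\mu\in\rho(A)$.

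For the case $\alpha=1$, the equivalence (c) $\iff$ (d) rests on the standard Favard-class characterization of $1$-admissibility: a general operator $B\in\sL(U,\XmeA)$ is a $1$-admissible control operator for $A$ iff $\rg(\RlAme B)\subseteq\Fav_1^A$ for some (equivalently, all) $\lambda\in\rho(A)$. Applied with $B=\LA$ and using $\RlAme\LA=L_\lambda$, this reduces to (c). For (c) $\iff$ (e), I would invoke the classical description
\[
\Fav_1^A=\bigl\{x\in X:\exists\,(x_n)\subset D(A),\ x_n\to x\text{ in }X,\ \sup_n\|Ax_n\|<+\infty\bigr\},
\]
reading the supremum in statement (e) as $\sup_n\|Ax_n\|$ rather than the literal $\sup_n\|x_n\|$ (the latter being automatic from convergence), together with the decomposition $x=(x-L_\mu Lx)+L_\mu Lx$, valid for every $x\in D(A_m)$, whose first summand lies in $D(A)\subseteq\Fav_1^A$. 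Then (c) yields $L_\mu Lx\in\Fav_1^A$, hence $D(A_m)\subseteq\Fav_1^A$ and (e) follows; conversely, (e) applied to elements $x=L_\mu c\in\rg(L_\mu)\subseteq D(A_m)$ forces $\rg(L_\mu)\subseteq\Fav_1^A$, which is (c).

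The principal obstacle I anticipate is (c) $\iff$ (d): it depends on the non-trivial characterization of $1$-admissibility via Favard classes which is special to the exponent $p=1$ (no analogous equivalence exists for $p>1$). The underlying argument is an exponential resolvent estimate combined with the closed-graph and density reduction already alluded to in Remark~\ref{rem:B-admiss-estimate}. The remaining steps are essentially bookkeeping with the resolvent identity and with the well-known characterizations of $\Fav_\alpha^A$ from \cite{EN:00}.
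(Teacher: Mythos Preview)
Your proposal is correct and follows essentially the same route as the paper: the equivalence (a)$\iff$(b) is the tautological substitution via the bijection $L|_{\ker(\lambda-A_m)}$, the equivalence (a)$\iff$(c) runs through the resolvent identity $L_\lambda=(\mu-A)R(\lambda,A)L_\mu$ combined with the Favard characterization \cite[Prop.~II.5.12]{EN:00}, and for $\alpha=1$ the additional equivalences are exactly the content of \cite[Cor.~III.3.6, Ex.~III.3.8.(4), Ex.~II.5.23.(2)]{EN:00}, which the paper simply cites while you spell out the arguments. Your observation that condition~(e) only makes sense if one reads $\sup_n\|Ax_n\|<+\infty$ in place of the literal $\sup_n\|x_n\|<+\infty$ is correct and matches the characterization in \cite[Ex.~II.5.23.(2)]{EN:00}; this is indeed a typo in the statement. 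One small point you and the paper both leave implicit in (a)$\Rightarrow$(b): passing from the pointwise boundedness of $\lambda^\alpha L_\lambda x$ in (a) to the uniform operator bound needed for (b) requires the uniform boundedness principle.
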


\begin{proof}  The equivalence of (a) and (b) follows immediately from the definition of $L_\lambda$ as the inverse of $L:\ker(\lambda-A)\to\dX$. To show the equivalence of (a) and (c), note that from  \eqref{eq:def-L_mu} we obtain for $x\in\dX$ and fixed $\mu\in\rho(A)$
\begin{align*}
\sup_{\lambda>\lambda_0}\bigl\|\lambda^\alpha A\RlA L_\mu x\bigr\|
&\le\sup_{\lambda>\lambda_0}\bigl\|\lambda^\alpha (\mu-A)\RlA L_\mu x\bigr\|+
\sup_{\lambda>\lambda_0}\bigl\| \mu\lambda^\alpha\RlA L_\mu x\bigr\|\\
&=\sup_{\lambda>\lambda_0}\bigl\|\lambda^\alpha L_\lambda x\bigr\|+\sup_{\lambda>\lambda_0}\bigl\| \mu\lambda^\alpha\RlA L_\mu x\bigr\|.
\end{align*}
Recall that $\alpha\le1$, hence by the Hille--Yosida theorem we have in any case
\[
\sup_{\lambda>\lambda_0}\bigl\| \mu\lambda^\alpha\RlA L_\mu x\bigr\|<+\infty.
\]
Thus, we conclude that
\[
\sup_{\lambda>\lambda_0}\bigl\|\lambda^\alpha A\RlA L_\mu x\bigr\|<+\infty
\quad\iff\quad
\sup_{\lambda>\lambda_0}\bigl\|\lambda^\alpha L_\lambda x\bigr\|<+\infty.
\]
By \cite[Prop.~II.5.12]{EN:00} the condition on the left-hand-side is equivalent to $L_\mu x\in\Fav_\alpha^A$ and therefore we obtain (a)$\iff$(c).

\smallbreak
For the equivalence of (a)--(c) and (d)--(e) in case $\alpha=1$ see \cite[Proof of Cor.~III.3.6]{EN:00},  \cite[Ex.~III.3.8.(4)]{EN:00} and \cite[Ex.~II.5.23.(2)]{EN:00}. 
\end{proof}

If one can represent a Banach space $Z$ as the domain $[D(K)]$ of a closed operator $K$ equipped with its graph norm, then the condition $[D((\lambda-A)^\gamma)]\inc Z$ appearing in Theorem~\ref{lem:A^alpha-admiss-pair}.(ii) and Corollary~\ref{cor:gen-expl-an}.(ii) can frequently be verified by the following result.

\begin{lem}\label{lem:RR-11.39}
Let $A$ be the generator of an analytic semigroup and let $K$ be a closed linear operator such that $D(A)\subseteq Z=[D(K)]$. If for $\alpha\in(0,1)$ and every $\rho\ge\rho_0>0$ we have
\begin{equation*} 
\|Kx\|\le M\cdot\bigl(\rho^\alpha\|x\|+\rho^{\alpha-1}\|Ax\|\bigr)
\quad\text{for all }x\in D(A)
\end{equation*}
and some constant $M\ge0$, then $[D((\lambda-A)^\gamma)]\inc Z$ for every $\gamma>\alpha$ and $\lambda>\gb(A)$.
\end{lem}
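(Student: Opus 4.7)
The plan is to reduce to the case where $A$ generates a bounded analytic semigroup with $0\in\rho(A)$, extract from the $\rho$-dependent hypothesis a classical interpolation inequality of the form $\|Kx\|\le C\|x\|^{1-\alpha}\|Ax\|^\alpha$, and then invert the fractional power $(-A)^\gamma$ via its Bochner-integral representation to show boundedness of $K(-A)^{-\gamma}$. Concretely, I first replace $A$ by $\tilde A:=A-\lambda$; since $\lambda>\gb(A)$ gives $\gb(\tilde A)<0$ and $0\in\rho(\tilde A)$, and since $\|Ax\|\le\|\tilde Ax\|+|\lambda|\cdot\|x\|$, the extra term $|\lambda|\rho^{\alpha-1}\|x\|$ is absorbed into $\rho^\alpha\|x\|$ for $\rho$ sufficiently large. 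So without loss of generality I may assume $\gb(A)<0$, $0\in\rho(A)$, and the hypothesis holds with (possibly enlarged) constants $\rho_0,M$.

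Next I would optimise in $\rho$: the map $\rho\mapsto\rho^\alpha\|x\|+\rho^{\alpha-1}\|Ax\|$ attains its unconstrained minimum at $\rho^*:=\tfrac{1-\alpha}{\alpha}\|Ax\|/\|x\|$ with value of order $\|x\|^{1-\alpha}\|Ax\|^\alpha$. If $\rho^*\ge\rho_0$ this is the desired interpolation inequality; otherwise $\|Ax\|$ is comparable to $\|x\|$ and evaluating the hypothesis at $\rho_0$ yields $\|Kx\|\le C\|x\|$, which combined with $\|x\|\le\|A^{-1}\|\cdot\|Ax\|$ (valid because $0\in\rho(A)$) can be re-expressed in the same form. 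Thus there exists $C>0$ with
\[
\|Kx\|\le C\|x\|^{1-\alpha}\|Ax\|^\alpha\qquad\text{for all }x\in D(A).
\]

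For the final step I fix $\gamma\in(\alpha,1)$; the case $\gamma\ge1$ then follows from the continuous embedding $[D((-A)^\gamma)]\inc[D((-A)^{\gamma_0})]$ for any $\gamma_0\in(\alpha,1)$. Using the standard representation
\[
(-A)^{-\gamma}x=\frac{1}{\Gamma(\gamma)}\int_0^\infty t^{\gamma-1}T(t)x\,dt,\quad x\in X,
\]
together with the analytic-semigroup bounds $\|T(t)\|\le Me^{-\omega t}$ and $\|AT(t)\|\le Ct^{-1}e^{-\omega t}$ for some $\omega>0$, the interpolation inequality applied to $T(t)x\in D(A^\infty)\subseteq D(K)$ yields $\|KT(t)x\|\le C' t^{-\alpha}e^{-\omega t}\|x\|$. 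Hence $t\mapsto t^{\gamma-1}KT(t)x$ is Bochner-integrable on $(0,\infty)$, precisely because $\gamma>\alpha$ makes $t^{\gamma-1-\alpha}$ integrable at $0$ while the exponential factor controls the tail. Closedness of $K$ then permits passing $K$ under the integral, so $(-A)^{-\gamma}x\in D(K)$ with $\|K(-A)^{-\gamma}x\|\le C''\|x\|$. Thus $K(-A)^{-\gamma}\in\sLX$, which is exactly the claim $[D((-A)^\gamma)]\inc Z$.

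The main technical point will be the justification of interchanging $K$ with the vector-valued integral; this is standard via approximation by truncations $\int_\eps^N$, to each of which $K$ applies term-by-term (as the integrand lies in $D(K)$), followed by a closedness argument. The whole construction rests on the strict gap $\gamma>\alpha$, which is exactly where the hypothesis of the lemma enters quantitatively.
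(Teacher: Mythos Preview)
Your proposal is correct and is essentially the argument the paper appeals to: the paper's own proof simply cites \cite[Lem.~11.39]{RR:93} for the boundedness of $K(\lambda-A)^{-\gamma}$ and then reads off the embedding, while you have unpacked precisely that reference---reducing to $\gb(A)<0$, optimising in $\rho$ to obtain the interpolation inequality $\|Kx\|\le C\|x\|^{1-\alpha}\|Ax\|^\alpha$, and then using the integral representation of $(-A)^{-\gamma}$ together with closedness of $K$. Your treatment is in fact slightly cleaner than the paper's phrasing, since once you show directly that $(-A)^{-\gamma}x\in D(K)$ with $\|K(-A)^{-\gamma}x\|\le C''\|x\|$ for every $x\in X$, the inclusion $D((-A)^\gamma)\subseteq D(K)$ and the continuity of the embedding are immediate, with no separate appeal to the closed graph theorem needed.
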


\begin{proof}
By (the proof of) \cite[Lem.~11.39]{RR:93} the operator $K(\lambda-A)^{-\gamma}$ is bounded which by the closed graph theorem implies that $D((\lambda-A)^\gamma)\subseteq D(K)$. Moreover, from the estimate
\[
\|Kx\|\le\bigl\|K(\lambda-A)^{-\gamma}\bigr\|\cdot\|(\lambda-A)^\gamma x\|
\]
it follows that  $[D((\lambda-A)^\gamma)]\inc Z$.
\end{proof}

Finally, we consider pairs $(B,C)$ where we assume that one of the operators $B$ or $C$ is bounded.

\begin{lem}\label{lem:norm-sFt}
Let  $A$ be the generator of a $C_0$-semigroup and let $p\ge1$.
\begin{enumerate}
\item If $B\in\sL(U,\XmeA)$ is a $p$-admissible control operator and $Z=X$, i.e., $C\in\sL(X,\Y)$ then $(A,B,C)$ is compatible, $(B,C)$ is jointly $p$-admissible and there exists $M\ge0$ and $\tn>0$ such that
\[
\bigl\|\Ft^{(A,B,C)}\bigr\|\le M\cdot t^{\frac1p}\quad\text{for all }0<t\le t_0.
\]
In particular, $Id_U$ is $p$-admissible for the triple $(A,B,C)$.
\item If $B\in\sL(U,X)$ and $C\in\sL(Z,\Y)$ is a $p$-admissible observation operator, then $(A,B,C)$ is compatible, $(B,C)$ is jointly $p$-admissible and there exists $M\ge0$ and $\tn>0$ such that
\[
\bigl\|\Ft^{(A,B,C)}\bigr\|\le M\cdot t^{1-\frac1p}\quad\text{for all }0<t\le\tn.
\]
In particular, if $p>1$ then $Id_U$ is $p$-admissible for the triple $(A,B,C)$.
\end{enumerate}
\end{lem}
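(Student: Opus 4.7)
The plan is to verify directly the conditions (i)--(v) of Theorem~\ref{thm:main-gen} in both cases, exploiting the boundedness of one of the operators $B$ or $C$ to reduce everything to the admissibility hypothesis on the other. Compatibility (i) is immediate in both settings: in (a) we have $Z=X$, and in (b) the assumption $B\in\sL(U,X)$ yields $\RlAme B=\RlA B$, whose range lies in $D(A)\subseteq\XeA\inc Z$.

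For part (a), the $p$-admissibility of $C$ is automatic from the boundedness of $C$ and the local boundedness of $\Tt$. The crux is (iv): pulling the bounded $C$ out of the $U$-norm and applying the $p$-admissibility of $B$ on each subinterval $[0,r]\subseteq[0,t]$ yields
\[
\bigl\|\bigl(\Ft^{(A,B,C)}u\bigr)(r)\bigr\|_U \le \|C\|\cdot M_B\cdot\|u\|_{\LpntU}
\]
uniformly in $r\in[0,t]$, where $M_B$ is an admissibility constant of $B$ valid on $[0,t_0]$. Raising this to the $p$-th power and integrating in $r$ produces the desired $t^{1/p}$-bound on $\|\Ft^{(A,B,C)}\|$, which together with (i)--(iii) also gives joint $p$-admissibility of $(B,C)$.

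For part (b), the $p$-admissibility of $B$ follows from H\"older's inequality and the local boundedness of $\Tt$, since $\bigl\|\int_0^t T(t-s)Bu(s)\ds\bigr\|_X$ is trivially controlled by a constant times $t^{1-1/p}\cdot\|u\|_{\LpntU}$. The key step is again (iv). Applying Minkowski's integral inequality to $\bigl(\Ft^{(A,B,C)}u\bigr)(r)=\int_0^r CT(r-s)Bu(s)\ds$ gives
\[
\bigl\|\Ft^{(A,B,C)}u\bigr\|_{\rL^p([0,t],U)} \le \int_0^t\biggl(\int_s^t\bigl\|CT(r-s)Bu(s)\bigr\|_U^p\dr\biggr)^{1/p}\ds.
\]
Substituting $\sigma=r-s$ in the inner integral and using the $p$-admissibility of $C$ applied to $x=Bu(s)\in X$ bounds the inner $L^p$-norm by $M^{1/p}\cdot\|B\|\cdot\|u(s)\|_U$; a final application of H\"older in $s$ delivers the claimed $t^{1-1/p}$-bound.

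For the $p$-admissibility of $\Id_U$ (condition (v)) asserted at the end of each part, it suffices to observe that under the indicated restrictions on $p$ the bounds above force $\bigl\|\Ft^{(A,B,C)}\bigr\|\to0$ as $t\to0^+$; hence $\Id-\Ft^{(A,B,C)}$ is invertible by the Neumann series for sufficiently small $t>0$, i.e., $1\in\rho(\Ft^{(A,B,C)})$. In (b) with $p=1$ the exponent $1-1/p$ vanishes and this argument indeed fails, in agreement with the restriction $p>1$ in the statement. The only real obstacle I anticipate is the clean setup of the Minkowski step in (b); everything else reduces to standard manipulations with admissibility constants and H\"older's inequality.
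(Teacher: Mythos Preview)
Your direct verification is correct and is, in substance, exactly what the paper's proof amounts to: the paper gives no argument of its own but simply cites \cite[Rems.~17 and 19]{ABE:13}, where these computations are carried out. Part~(a) is clean as written, once one notes that $p$-admissibility of $B$ for time $t_0$ yields the same control-map bound on every subinterval $[0,r]\subseteq[0,t_0]$ (extend $u$ by zero and shift).

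The obstacle you flagged in~(b) is real and worth making precise. The identity
\[
\bigl(\Ft^{(A,B,C)}u\bigr)(r)=\int_0^r CT(r-s)Bu(s)\ds
\]
is not immediate, because $Bu(s)\in X$ while $C$ is only bounded on $Z$, so $CT(r-s)Bu(s)$ need not be defined pointwise, and the admissibility estimate for $C$ is stated only for $x\in D(A)$. The standard remedy is regularization: for $u\in\WnzpntU$ set $J_n:=nR(n,A)$; since $J_n$ commutes with $T(\cdot)$ one has $\int_0^r T(r-s)J_nBu(s)\ds=J_n\int_0^r T(r-s)Bu(s)\ds$, and the latter lies in $D(A)$ because $Bu\in\rC^1([0,t],X)$ with $Bu(0)=0$. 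Hence $J_n\int_0^r T(r-s)Bu(s)\ds\to\int_0^r T(r-s)Bu(s)\ds$ in $\XeA\inc Z$, so applying $C$ is legitimate in the limit. For the regularized integrand $J_nBu(s)\in D(A)$ one may move $C$ inside the integral and your Minkowski--H\"older computation goes through verbatim, yielding
\[
\bigl\|\Ft^{(A,B,C)}u\bigr\|_p\le M^{1/p}\,\|B\|\,t^{1-\frac1p}\,\|u\|_p
\]
after letting $n\to\infty$. With this adjustment your proof is complete and matches the content of the cited remarks.
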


\begin{proof}
The assertions follow immediately from \cite[Rems.~17 and 19]{ABE:13}.
\end{proof}

\end{document}